\theoremstyle{plain}
\newtheorem{theorem}{Theorem}[section]
\newtheorem{Def}[theorem]{Definition}
\newtheorem{lemma}[theorem]{Lemma}
\newtheorem{corollary}[theorem]{Corollary}
\newtheorem{proposition}[theorem]{Proposition}
\theoremstyle{definition}
\newtheorem{example}[theorem]{Example}
\theoremstyle{remark}
\newtheorem{case[theorem]}{Case}
\def \R{{\mathbb R}}
\def \N{{\mathbb N}}
\def \C{{\mathbb C}}
\def\norm#1.#2.{\lVert#1\rVert_{#2}}
\def\R{\mathbb R}
\def \S{{\mathcal S}}
\def \H{{\mathcal H}}
\title{The heat semigroup associated with the Jacobi--Cherednik operator and its applications}
\author{Anirudha Poria}
\address{Department of Applied Mathematics, School of Mathematics and Physics, Xi’an Jiaotong-Liverpool University, Suzhou 215123, China}
\email{Anirudha.Poria@xjtlu.edu.cn}
\author{Ramakrishnan Radha} 
\address{Department of Mathematics, Indian Institute of Technology Madras, Chennai 600036, India} 
\email{radharam@iitm.ac.in}
\keywords{Jacobi--Cherednik operator; heat semigroup; Opdam--Cherednik transform; Markov processes; Poisson's equation.}
\subjclass[2020]{Primary 42A99; Secondary 58J35,  47D07.}
\date{\today}
\begin{document}
\maketitle

\begin{abstract} 
In this paper, we study the heat equation associated with the Jacobi--Cherednik operator on the real line. We establish some basic properties of the Jacobi--Cherednik heat kernel and heat semigroup. We also provide a solution to the Cauchy problem for the Jacobi--Cherednik heat operator and prove that the heat kernel is strictly positive. Then, we characterize the image of the space $L^2(\mathbb R, A_{\alpha, \beta})$ under the Jacobi--Cherednik heat semigroup as a reproducing kernel Hilbert space. As an application, we solve the modified Poisson equation and present the Jacobi--Cherednik--Markov processes.
\end{abstract}

\section{Introduction}  

During the last three decades, considerable attention has been devoted to studying the Dunkl, Jacobi--Dunkl, and Jacobi--Cherednik differential-difference operators due to their relevance in various fields of mathematics and physical applications. These operators play an important role in the harmonic analysis related to the theory of Heckman, Opdam and Cherednik (see \cite{hec91, opd95, opd00}). Let $\alpha \geq -\frac{1}{2}$, the Dunkl operator $D_\alpha$ acting on smooth functions $f$ on $\R$, is defined by (see \cite{dun91})
\[ D_\alpha f(x)= \frac{d}{dx} f(x)+ \frac{2 \alpha+1}{x} \left(  \frac{f(x)-f(-x)}{2} \right). \]
For $\alpha > -\frac{1}{2}$ and $\beta \in \R$, the Jacobi--Dunkl differential-difference operator $\Lambda_{\alpha, \beta}$ defined on $\R$ by (see \cite{hec91, opd95})
\[ \Lambda_{\alpha, \beta} f(x) = \frac{d}{dx} f(x)+ \Big[  (2\alpha + 1) \coth x + (2\beta + 1) \tanh x \Big] \frac{f(x)-f(-x)}{2} . \]
It is the Jacobi or hyperbolic analogue of the Dunkl operator and well-defined for $f \in C^1(\R)$. In this work, we consider the Jacobi--Cherednik differential-difference operator $T_{\alpha, \beta}$ defined on $\R$ by 
\[ T_{\alpha, \beta} f(x)=  \Lambda_{\alpha, \beta} f(x) - \rho f(-x), \]
where $\alpha, \beta$ satisfying $\alpha \geq \beta \geq -\frac{1}{2}$ and $\alpha > -\frac{1}{2}$, and $\rho= \alpha + \beta + 1$. The square $D^2_\alpha$ of the Dunkl operator investigated in \cite{ros98} and the square $\Lambda^2_{\alpha, \beta}$ of the Jacobi--Dunkl operator investigated in \cite{chou06}. Here, we attempt to study the operator $T^2_{\alpha, \beta}$, which is the square of the Jacobi--Cherednik operator.

In \cite{ros98}, the author studied the heat equation associated with Dunkl's Laplacian and proved that the corresponding Cauchy problem is governed by a positive one-parameter semigroup by using the maximum principle for the generalized Laplacian. Then, the authors in \cite{chou06} investigated the heat equation associated with the Jacobi--Dunkl operator and proved that the heat semigroup has a strictly positive kernel and a finite Green operator. In this paper, we generalize the results investigated in \cite{ros98, chou06} and study the heat equation associated with the Jacobi--Cherednik operator on the real line. We establish some basic properties of the Jacobi--Cherednik heat kernel and heat semigroup. We also provide an explicit solution to the Cauchy problem for the Jacobi--Cherednik heat operator using the heat kernel and prove that the heat kernel is strictly positive. Moreover, we characterize the image of the space $L^2(\mathbb R, A_{\alpha, \beta})$ under the Jacobi--Cherednik heat semigroup as a reproducing kernel Hilbert space. As an application, we solve the modified Poisson equation and present the Jacobi--Cherednik--Markov processes.

An important motivation to study the Jacobi--Cherednik operators arises from their relevance in the algebraic description of exactly solvable quantum many-body systems of Calogero--Moser--Sutherland type (see \cite{die00, hik96}) and they provide a useful tool in the study of special functions with root systems (see \cite{dun91, hec91}). These describe algebraically integrable systems in one dimension and have gained considerable interest in mathematical physics. Another motivation for the investigation of the Jacobi--Cherednik operator is to generalize the previous subjects which are bound with the physics. For a more detailed discussion, we refer to \cite{mon221, por23}. Considerable attention has been devoted to discovering applications of these operators to new contexts (see \cite{mej14, mon222, mon223, mon23, por21}). In this paper, we attempt to study the heat equation associated with the Jacobi--Cherednik operator. The Jacobi--Cherednik operators have implications in two main areas: quantum mechanics and signal analysis, and the heat equation is widely used in these areas (see \cite{hik96, ros98}). We hope that the study of the Jacobi--Cherednik heat kernel and heat semigroup makes a significant impact in these areas.

The main aim of this paper is to show that the heat equation associated with the Jacobi--Cherednik operator has a solution of the form
\[ P^{\alpha, \beta}_t f(x)= \int_{\R} p^{\alpha, \beta}_t(x, y) f(y) A_{\alpha, \beta} (y)dy, \]
where $p^{\alpha, \beta}_t(x, y)$ is a strictly positive kernel. We also prove that $\left(P^{\alpha, \beta}_t, \;t \geq 0\right)$ is a strongly continuous semigroup of operators on the space $L^2(\R, A_{\alpha, \beta})$ consisting of square-integrable functions on $\R$ with respect to the measure $A_{\alpha, \beta}(x) dx$. Using the Opdam--Cherednik transform and structure of the semigroup, we obtain a spectral representation of the heat kernel $p^{\alpha, \beta}_t$ of the form
\[ p_t^{\alpha, \beta}(x, y)= \int_{\mathbb{R}} e^{-\frac{t}{2}\left( \lambda^2+\rho^2\right)}  G^{\alpha, \beta}_\lambda(x)\;  G^{\alpha, \beta}_\lambda(-y) \; d\sigma_{\alpha, \beta}(\lambda), \quad \text{for  all\;} x,y \in \R, \; t>0, \]
where $G^{\alpha, \beta}_\lambda$ is the eigenfunction of the Jacobi--Cherednik operator $T_{\alpha, \beta}$ and $d\sigma_{\alpha, \beta}$ is the generalized Plancherel measure. 

In \cite{bar61}, Bargmann first studied the image of $L^2(\R^n)$ under Segal--Bargmann transform as a space of analytic functions, square integrable with some non-negative weight function. Over the years, this topic became popular and have drawn significant attention among many researchers, see, for example, \cite{hal04, tha07, rad09}, and references therein. Here, we consider the space $L^2(\R, A_{\alpha, \beta})$ and characterize the image of this space under the Jacobi--Cherednik heat semigroup $\left(P^{\alpha, \beta}_t, \;t \geq 0 \right)$ as a reproducing kernel Hilbert space.

Moreover, we obtain a family of Markov processes $\left( X_t, \;t \geq 0\right)$ from the semigroups $\left(P^{\alpha, \beta}_t, \;t \geq 0\right)$ on the real line, which we call Jacobi--Cherednik processes with transition probability densities given by $ p^{\alpha, \beta}_t(x, y) A_{\alpha, \beta} (y) $. Then, we show that the absolute value $\left( |X_t| , \;t \geq 0\right)$ of a Jacobi--Cherednik process is a diffusion process with infinitesimal generator 
\[ \frac{d^2}{dx^2} + \Big[ (2\alpha + 1) \coth x + (2\beta + 1) \tanh x \Big] \frac{d}{dx} + \rho^2 I. \]
Finally, we solve the modified Poisson equation $\frac{1}{2}\left(T^2_{\alpha, \beta} - \rho^2 \right) u=-f$ for all $f \in L^1(\R, A_{\alpha, \beta})$.

The paper is organized as follows. In Section \ref{sec2}, we present some preliminaries related to the Jacobi--Cherednik operator. In Section \ref{sec3}, we establish some basic properties of the Jacobi--Cherednik heat kernel and heat semigroup. We also provide a solution to the Cauchy problem for the Jacobi--Cherednik heat operator and prove that the heat kernel is strictly positive. In Section \ref{sec5}, we characterize the image of the space $L^2(\R, A_{\alpha, \beta})$ under the Jacobi--Cherednik heat semigroup $\left(P^{\alpha, \beta}_t, \;t \geq 0 \right)$ as a reproducing kernel Hilbert space. Finally, in Section \ref{sec4}, we introduce the Jacobi--Cherednik--Markov processes and solve the modified Poisson equation.

\section{Preliminaries}\label{sec2}

In this section, we give a brief overview of the Jacobi--Cherednik operator and related harmonic analysis. The main references for this section are \cite{mej14, opd95, opd00, sch08}. However, we will use the same notation as in \cite{por21}.

Let $T_{\alpha, \beta}$ denote the Jacobi--Cherednik differential--difference operator (also called the Dunkl--Cherednik operator)
\[T_{\alpha, \beta} f(x)=\frac{d}{dx} f(x)+ \Big[ 
(2\alpha + 1) \coth x + (2\beta + 1) \tanh x \Big] \frac{f(x)-f(-x)}{2} - \rho f(-x), \]
where $\alpha, \beta$ are two parameters satisfying $\alpha \geq \beta \geq -\frac{1}{2}$ and $\alpha > -\frac{1}{2}$, and $\rho= \alpha + \beta + 1$. 
It can also be written in the form
\[T_{\alpha, \beta} f(x)=\frac{d}{dx} f(x)+ \frac{A'_{\alpha, \beta} (x)}{A_{\alpha, \beta} (x)} \left(  \frac{f(x)-f(-x)}{2} \right) - \rho f(-x), \]
where \[ A_{\alpha, \beta} (x)= (\sinh |x| )^{2 \alpha+1} (\cosh x )^{2 \beta+1}. \] 

Let $\lambda \in \C$. The Opdam hypergeometric functions $G^{\alpha, \beta}_\lambda$ on $\R$ are eigenfunctions $T_{\alpha, \beta} G^{\alpha, \beta}_\lambda(x)=i \lambda  G^{\alpha, \beta}_\lambda(x)$ of $T_{\alpha, \beta}$ that are normalized such that $G^{\alpha, \beta}_\lambda(0)=1$. The eigenfunction $G^{\alpha, \beta}_\lambda$ is given by
\[G^{\alpha, \beta}_\lambda (x)= \varphi^{\alpha, \beta}_\lambda (x) - \frac{1}{\rho - i \lambda} \frac{d}{dx}\varphi^{\alpha, \beta}_\lambda (x)=\varphi^{\alpha, \beta}_\lambda (x)+ \frac{\rho+i \lambda}{4(\alpha+1)} \sinh 2x \; \varphi^{\alpha+1, \beta+1}_\lambda (x),  \]
where $\varphi^{\alpha, \beta}_\lambda (x)={}_2F_1 \left(\frac{\rho+i \lambda}{2}, \frac{\rho-i \lambda}{2} ; \alpha+1; -\sinh^2 x \right) $ is the hypergeometric function. For $\lambda \in \R$, the function $G^{\alpha, \beta}_\lambda$ is real and strictly positive.

Let $p$ and $q$ be polynomials of degree $m$ and $n$. Then there exists a positive constant $C$ such that for every $\lambda \in \mathbb{C}$ and $x \in \mathbb{R}$, we have
\begin{equation}\label{eq01}
\left|p\left(\frac{\partial}{\partial \lambda}\right) q\left(\frac{\partial}{\partial x}\right) G^{\alpha, \beta}_\lambda(x)\right| \leq  C(1+|x|)^n(1+|\lambda|)^m e^{-\rho|x|} e^{|\mathrm{Im} \lambda||x|} .
\end{equation}
In particular, for every $ \lambda \in \C$ and $x \in  \R$, the eigenfunction
$G^{\alpha, \beta}_\lambda$ satisfy
\begin{equation}\label{eq04}
\left| \frac{\partial^n}{\partial x^n} G^{\alpha, \beta}_\lambda(x)\right| \leq  C(1+|x|)^n e^{-\rho|x|} e^{|\mathrm{Im} \lambda||x|}.
\end{equation}
The function $G^{\alpha, \beta}_0$ is strictly positive and bounded above by $C(1+x) e^{-\rho x}$, if $x \geq 0$ and $C e^{\rho x}$, if $x \leq 0$. Also, for every $\lambda \in \mathbb{R}$ and $x \in \mathbb{R}$, we have $  | G^{\alpha, \beta}_\lambda(x) | \leq G^{\alpha, \beta}_0(x)$. The hypergeometric functions $G^{\alpha, \beta}_\lambda$ satisfy $G^{\alpha, \beta}_{\lambda }(tx)=G^{\alpha, \beta}_{\lambda  t}(x)$,  for every $ \lambda \in \C $ and $x, t \in  \R$. In particular, for $t=0$, we have $G^{\alpha, \beta}_{0}(x)= G^{\alpha, \beta}_{\lambda }(0)=1$. For a detailed study of the theory of Opdam hypergeometric functions $G^{\alpha, \beta}_\lambda$, we refer to \cite{sch08}.

Let us denote by $C_c (\R)$ the space of continuous functions on $\R$ with compact support. The Opdam--Cherednik transform is the Fourier transform in the trigonometric Dunkl setting, and it is defined as follows.
\begin{Def}
Let $\alpha \geq \beta \geq -\frac{1}{2}$ with $\alpha > -\frac{1}{2}$. The Opdam--Cherednik transform $\mathcal{H}_{\alpha, \beta} (f)$ of a function $f \in C_c(\R)$ is defined by
\[ \H_{\alpha, \beta} (f) (\lambda)=\int_{\R} f(x)\; G^{\alpha, \beta}_\lambda(-x)\; A_{\alpha, \beta} (x) dx \quad \text{for all } \lambda \in \C. \]
The inverse Opdam--Cherednik transform for a suitable function $g$ on $\R$ is given by
\[ \H_{\alpha, \beta}^{-1} (g) (x)= \int_{\R} g(\lambda)\; G^{\alpha, \beta}_\lambda(x)\; d\sigma_{\alpha, \beta}(\lambda) \quad \text{for all } x \in \R, \]
where $$d\sigma_{\alpha, \beta}(\lambda)= \left(1- \dfrac{\rho}{i \lambda} \right) \dfrac{d \lambda}{8 \pi |C_{\alpha, \beta}(\lambda)|^2}$$ and 
$$C_{\alpha, \beta}(\lambda)= \dfrac{2^{\rho - i \lambda} \Gamma(\alpha+1) \Gamma(i \lambda)}{\Gamma \left(\frac{\rho + i \lambda}{2}\right)\; \Gamma\left(\frac{\alpha - \beta+1+i \lambda}{2}\right)}, \quad \lambda \in \C \setminus i \mathbb{N}.$$
\end{Def}

The Plancherel formula is given by 
\begin{equation}\label{eq03}
\int_{\R} |f(x)|^2 A_{\alpha, \beta}(x) dx=\int_\R \H_{\alpha, \beta} (f)(\lambda) \overline{\H_{\alpha, \beta} ( \check{f})(-\lambda)} \; d \sigma_{\alpha, \beta} (\lambda),
\end{equation}
where $\check{f}(x):=f(-x)$. Since $G^{\alpha, \beta}_{\lambda }(tx)=G^{\alpha, \beta}_{\lambda  t}(x)$, for every $ \lambda \in \C $ and $x, t \in  \R$, using the definition of $\mathcal{H}_{\alpha, \beta}$, we obtain $\mathcal{H}_{\alpha, \beta}(\check{f})(-\lambda)=\mathcal{H}_{\alpha, \beta}(f)(\lambda)$. Therefore, we can rewrite the Plancherel formula (\ref{eq03}) as follows: 
\begin{equation}\label{newpf}
\int_{\mathbb{R}}|f(x)|^{2} A_{\alpha, \beta}(x) d x=\int_{\mathbb{R}} \left| \mathcal{H}_{\alpha, \beta}(f)(\lambda) \right|^2 d \sigma_{\alpha, \beta}(\lambda).
\end{equation}

Let $L^p(\R,A_{\alpha, \beta} )$ (resp. $L^p(\R, \sigma_{\alpha, \beta} )$), $p \in [1, \infty] $, denote the $L^p$-spaces corresponding to the measure $A_{\alpha, \beta}(x) dx$ (resp. $d | \sigma_{\alpha, \beta} |(x)$). The Schwartz space $\S_{\alpha, \beta}(\R )=(\cosh x )^{-\rho} \S(\R)$ is defined as the space of all differentiable functions $f$ such that 
$$ \sup_{x \in \R} \; (1+|x|)^m e^{\rho |x|} \left|\frac{d^n}{dx^n} f(x) \right|<\infty,$$ 
for all $m, n \in \N_0 = \N \cup \{0\}$, equipped with the obvious seminorms. The Opdam--Cherednik transform $\H_{\alpha, \beta}$ and its inverse $\H_{\alpha, \beta}^{-1}$ are topological isomorphisms between the space $\S_{\alpha, \beta}(\R )$ and the space $\S(\R)$ (see \cite{sch08}, Theorem 4.1).

Let $\S^r_{\alpha, \beta} (\mathbb{R})$ be the weighted Schwartz space defined by $\S^r_{\alpha, \beta}(\R )=(\cosh x )^{-\frac{\rho}{r}} \S(\R)$, equipped with the topology induced by the seminorms 
\[ \mathcal{N}_{m, n}^r(f)=\sup _{\substack{x \in \mathbb{R} \\ 0 \leq k \leq n}}(\cosh x)^{\frac{ \rho}{r}}\left(1+x^2\right)^m\left|\frac{\mathrm{d}^k}{\mathrm{~d} x^k} f(x)\right| < \infty, \quad \text{for \;all\;} m, n \in \N. \]
For $0<r<1$, let $\S\left(\Omega_{\varepsilon(r)}\right)$ be the extended Schwartz space of all functions $h$ that are analytic in the interior of $\Omega_{\varepsilon(r)}=\{z \in \mathbb{C} \; : \; |\operatorname{Im}(z)| \leq \varepsilon(r)\}$, where $\varepsilon(r)=\left(\frac{1}{r}-1\right) \rho $, and such that $h$ together with all its derivatives extend continuously to $\Omega_{\varepsilon(r)}$ and satisfy 
\[\sup _{\lambda \in \Omega_{\varepsilon(r)}}(1+|\lambda|)^m \left|\frac{\mathrm{d}^n}{\mathrm{~d} \lambda^n} h(\lambda)\right|<\infty, \quad \text{for \;all\;} m, n \in \N. \]
If  $r=1$, the space $\S\left(\Omega_{\varepsilon(1)}\right)$ is the usual Schwartz space $\S(\R)$. For all $0<r \leq 1$, the Opdam--Cherednik transform $\H_{\alpha, \beta}$ is a topological isomorphism between
\begin{equation}\label{eq2}
\S^r_{\alpha, \beta}(\mathbb{R}) \quad \text{and} \quad
\S\left(\Omega_{\varepsilon(r)}\right).
\end{equation}
Also, the Opdam--Cherednik transform $\H_{\alpha, \beta}$ extends uniquely to an isometric isomorphism from $L^2(\R, A_{\alpha, \beta} )$ onto $L^2(\R, \sigma_{\alpha, \beta})$.

Since $T_{\alpha, \beta} G^{\alpha, \beta}_\lambda(x)=i \lambda  G^{\alpha, \beta}_\lambda(x)$, we have
\begin{equation}\label{eq02}
T^2_{\alpha, \beta} G^{\alpha, \beta}_\lambda(x)=i \lambda \; T_{\alpha, \beta} G^{\alpha, \beta}_\lambda(x)
=i \lambda \left( i \lambda G^{\alpha, \beta}_\lambda(x) \right)
=- \lambda^2  G^{\alpha, \beta}_\lambda(x).
\end{equation}

\section{The heat equation for the Jacobi--Cherednik operator}\label{sec3}

\subsection{The differential--difference operator $T^2_{\alpha, \beta}$}
Here, we study the operator $T^2_{\alpha, \beta}$. Let 
\[ Df(x)= \frac{d}{dx} f(x), \quad Mf(x) = \frac{f(x)-f(-x)}{2}, \quad \text{and} \quad Sf(x)=f(-x). \]
Then, for $f \in C^2(\R)$, using the fact that $\frac{A'_{\alpha, \beta} }{A_{\alpha, \beta} }$ is an odd function, we obtain
\begin{eqnarray}\label{eq05}
&& T^2_{\alpha, \beta} f(x)
=  T_{\alpha, \beta} \left( T_{\alpha, \beta} f(x) \right)  \nonumber \\ 
&& = \left[ D + \frac{A'_{\alpha, \beta} }{A_{\alpha, \beta} }\;  M  - \rho S \right] \left( \frac{d}{dx} f(x)+ \frac{A'_{\alpha, \beta} (x)}{A_{\alpha, \beta} (x)} \; Mf(x) - \rho \;Sf(x) \right) \nonumber \\ 
&& = \frac{d^2}{dx^2} f(x) + \frac{d}{dx} \left( \frac{A'_{\alpha, \beta}}{A_{\alpha, \beta} } \; Mf \right) (x) + \rho f'(-x) + \frac{A'_{\alpha, \beta} (x)}{A_{\alpha, \beta} (x)} Mf'(x) - \rho \frac{A'_{\alpha, \beta} (x)}{A_{\alpha, \beta} (x)} M f(-x) \nonumber \\ 
&& \quad  + \frac{A'_{\alpha, \beta} (x)}{A_{\alpha, \beta} (x)} \; M  \left( \frac{A'_{\alpha, \beta} }{A_{\alpha, \beta} } \; Mf \right) (x)  - \rho  f'(-x) + \rho \frac{A'_{\alpha, \beta} (x)}{A_{\alpha, \beta} (x)} Mf(-x) + \rho^2 f(x) \nonumber \\ 
&& = \frac{d^2}{dx^2} f(x) + \frac{d}{dx} \left( \frac{A'_{\alpha, \beta} }{A_{\alpha, \beta} } \right) (x) Mf(x) + \frac{A'_{\alpha, \beta} (x)}{A_{\alpha, \beta} (x)} f'(x) + \rho^2 f(x).
\end{eqnarray}
Now, we consider a subspace of $C^2(\R)$ consisting of functions satisfying 
\begin{equation}\label{eq9}
\frac{d^2}{dx^2} f(x_0) + \rho^2 f(x_0) \leq 0 \quad \text{if} \quad \frac{d^2}{dx^2} f(x_0) \leq 0,
\end{equation}
for some point $x_0 \in \R$. We denote this subspace as $C^2_\rho (\R)$, and show that $C^2_\rho (\R)$ is non-empty. We first give an example to show that $C^2_\rho (\R)$ is non-empty. 
\begin{example}
Let $\alpha=\frac{1}{2}$ and $\beta=-\frac{1}{2}$. Then $\rho=1$. Let us take the Gaussian function $f(x)=e^{-x^2}$ and $x_0=0$. Then $\frac{d^2}{dx^2} f(x_0)=-2 <0$, and the function $f$ satisfies the condition $\frac{d^2}{dx^2} f(x_0) + \rho^2 f(x_0)=-1 < 0$.
\end{example}

\begin{proposition}\label{pro1}
Let $\Omega \subset \mathbb{R}$ be open and symmetric. If a real valued function $f \in C^2_\rho(\Omega)$ attains an absolute maximum at $x_0 \in \Omega$,  i.e., $f(x_0)=\sup _{x \in \Omega} f(x)$, then $T^2_{\alpha, \beta} f(x_0 ) \leq 0$.
\end{proposition}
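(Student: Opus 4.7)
\medskip

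My plan is to evaluate the identity (\ref{eq05}) at the maximum point $x_0$ and show each contribution is non-positive, using in turn three facts: (i) $x_0$ is an interior critical point of a $C^2$ function, (ii) the symmetry of $\Omega$ combined with absolute maximality controls the sign of $Mf(x_0)$, (iii) the coefficient function $A'_{\alpha,\beta}/A_{\alpha,\beta}$ has negative derivative away from $0$ under the standing hypotheses on $\alpha,\beta$, and (iv) the defining property of $C^2_\rho$ absorbs the remaining potentially positive term $\rho^2 f(x_0)$.

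Concretely, since $x_0$ lies in the open set $\Omega$ and $f$ is of class $C^2$, we have $f'(x_0)=0$ and $f''(x_0)\le 0$. Because $\Omega$ is symmetric, $-x_0\in\Omega$ as well, so the absolute maximum hypothesis gives $f(x_0)\ge f(-x_0)$, which translates to $Mf(x_0)=\tfrac12(f(x_0)-f(-x_0))\ge 0$. Plugging these into the last line of (\ref{eq05}) yields
\[
T^2_{\alpha,\beta} f(x_0) \;=\; f''(x_0) \;+\; \rho^2 f(x_0) \;+\; \Bigl(\tfrac{A'_{\alpha,\beta}}{A_{\alpha,\beta}}\Bigr)'(x_0)\, Mf(x_0),
\]
the cross term $(A'_{\alpha,\beta}/A_{\alpha,\beta})(x_0) f'(x_0)$ vanishing.

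Next I would show that the derivative of the logarithmic weight is non-positive at $x_0$. For $x>0$ one has $A'_{\alpha,\beta}(x)/A_{\alpha,\beta}(x)=(2\alpha+1)\coth x+(2\beta+1)\tanh x$, whose derivative equals $-(2\alpha+1)\operatorname{csch}^2 x+(2\beta+1)\operatorname{sech}^2 x$. The identity $\operatorname{csch}^2 x-\operatorname{sech}^2 x=(\sinh x\cosh x)^{-2}>0$, combined with $2\alpha+1>0$ and $\alpha\ge\beta$, forces this expression to be strictly negative; since $A'_{\alpha,\beta}/A_{\alpha,\beta}$ is odd its derivative is even, so the same bound holds for $x_0<0$. (If $x_0=0$, then $Mf(x_0)=0$ and this middle term simply vanishes.) Hence $(A'_{\alpha,\beta}/A_{\alpha,\beta})'(x_0)\,Mf(x_0)\le 0$.

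Finally, the assumption $f\in C^2_\rho(\Omega)$ tells us that, since $f''(x_0)\le 0$, the inequality $f''(x_0)+\rho^2 f(x_0)\le 0$ holds as well; this is exactly the role the $C^2_\rho$ class was defined to play, and it is the step I expect to be the essential obstacle, because without it the term $\rho^2 f(x_0)$ could well be positive at a maximum. Summing the three non-positive contributions gives $T^2_{\alpha,\beta} f(x_0)\le 0$, completing the proof.
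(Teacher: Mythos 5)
Your proof is correct and follows essentially the same route as the paper: evaluate \eqref{eq05} at the interior maximum, use $f'(x_0)=0$, $f''(x_0)\le 0$, the symmetry of $\Omega$ to get $Mf(x_0)\ge 0$, the monotonicity of $A'_{\alpha,\beta}/A_{\alpha,\beta}$, and the $C^2_\rho$ condition to absorb $\rho^2 f(x_0)$. The only difference is that you explicitly verify $(A'_{\alpha,\beta}/A_{\alpha,\beta})'\le 0$ and treat the point $x_0=0$ separately, steps the paper simply asserts, so your version is if anything slightly more careful.
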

\begin{proof}
Since $f\left(x_0\right)=\sup _{x \in \Omega} f(x)$, we have $\frac{\mathrm{d}}{\mathrm{d} x} f\left(x_0\right)=0$ and $\frac{\mathrm{d}^2}{\mathrm{~d} x^2} f\left(x_0\right) \leq 0$. Also, as $f \in C^2_\rho(\Omega)$, we get $\frac{\mathrm{d}^2}{\mathrm{~d}x^2} f(x_0) + \rho^2 f(x_0) \leq 0$. Since $\frac{A'_{\alpha, \beta}}{A_{\alpha, \beta}}$ is a decreasing function, we have $\frac{\mathrm{d}}{\mathrm{d} x} \left( \frac{A'_{\alpha, \beta}}{A_{\alpha, \beta}} \right)(x_0) \leq 0$, and $ Mf(x_0)= \frac{f\left(x_0\right)-f\left(-x_0\right)}{2} \geq 0$. Hence, from (\ref{eq05}), we get $T^2_{\alpha, \beta} f(x_0 ) \leq 0$.
\end{proof}  
  
\begin{lemma}
For every $f \in \S^r_{\alpha, \beta} (\mathbb{R})$ and $\lambda \in \R$, we have 
\begin{equation}\label{eq06}
\H_{\alpha, \beta} \left( T^2_{\alpha, \beta} f \right) (\lambda )= - \lambda^2 \H_{\alpha, \beta} (f) (\lambda).
\end{equation}
\end{lemma}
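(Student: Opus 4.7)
The plan is to derive (\ref{eq06}) by passing to the spectral side using the Opdam--Cherednik inversion formula together with the eigenfunction identity (\ref{eq02}). Since $f \in \S^r_{\alpha, \beta}(\R)$, the isomorphism (\ref{eq2}) tells us that $\H_{\alpha, \beta}(f) \in \S(\Omega_{\varepsilon(r)})$, hence $\H_{\alpha, \beta}(f)$ and all its polynomial multiples are integrable against $d\sigma_{\alpha, \beta}$ (the Plancherel density $|C_{\alpha, \beta}(\lambda)|^{-2}$ has only polynomial growth in $\lambda$, while $\H_{\alpha, \beta}(f)$ decays faster than any polynomial). By inversion one may write
\[
f(x) = \int_{\R} \H_{\alpha, \beta}(f)(\lambda)\, G^{\alpha, \beta}_\lambda(x)\, d\sigma_{\alpha, \beta}(\lambda).
\]

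Next I would apply $T^2_{\alpha, \beta}$ to both sides and interchange $T^2_{\alpha, \beta}$ with the $\lambda$-integral. The expression (\ref{eq05}) shows that $T^2_{\alpha, \beta}$ is a local differential operator of order $2$ plus the bounded operator $\rho^2 I$ and a lower-order term involving $M$; hence it suffices to differentiate $G^{\alpha, \beta}_\lambda(x)$ in $x$ up to order two under the integral. For real $\lambda$, the pointwise bound (\ref{eq04}) gives
\[
\left| \tfrac{\partial^n}{\partial x^n} G^{\alpha, \beta}_\lambda(x) \right| \leq C(1+|x|)^n e^{-\rho|x|}, \qquad n=0,1,2,
\]
which is uniform in $\lambda \in \R$ for each fixed $x$. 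Combined with the rapid decay of $\H_{\alpha, \beta}(f)(\lambda)$ against $d\sigma_{\alpha, \beta}$, this provides an integrable majorant and justifies moving $T^2_{\alpha, \beta}$ inside the integral.

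Invoking (\ref{eq02}), $T^2_{\alpha, \beta} G^{\alpha, \beta}_\lambda(x) = -\lambda^2 G^{\alpha, \beta}_\lambda(x)$, and hence
\[
T^2_{\alpha, \beta} f(x) = \int_{\R} \bigl(-\lambda^2\bigr) \H_{\alpha, \beta}(f)(\lambda)\, G^{\alpha, \beta}_\lambda(x)\, d\sigma_{\alpha, \beta}(\lambda) = \H_{\alpha, \beta}^{-1}\!\bigl(-\lambda^2\, \H_{\alpha, \beta}(f)\bigr)(x).
\]
Since $-\lambda^2 \H_{\alpha, \beta}(f)$ still belongs to $\S(\Omega_{\varepsilon(r)})$ and $\H_{\alpha, \beta}$ is a topological isomorphism between $\S^r_{\alpha, \beta}(\R)$ and $\S(\Omega_{\varepsilon(r)})$, applying $\H_{\alpha, \beta}$ to both sides yields (\ref{eq06}).

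The main technical obstacle is the justification of differentiation under the integral sign, which needs uniform-in-$\lambda$ control of the $x$-derivatives of $G^{\alpha, \beta}_\lambda$ versus the weight $d\sigma_{\alpha, \beta}$; this is settled by (\ref{eq04}) together with the Schwartz character of $\H_{\alpha, \beta}(f)$ guaranteed by (\ref{eq2}). A minor check is that $T^2_{\alpha, \beta}$ preserves $\S^r_{\alpha, \beta}(\R)$, which is immediate from (\ref{eq05}) together with the smoothness and parity of $A'_{\alpha, \beta}/A_{\alpha, \beta}$ and the evenness of $\cosh$.
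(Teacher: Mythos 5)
Your proof is correct, but it takes a genuinely different route from the paper's. You work entirely on the spectral side: you expand $f$ by the inversion formula, pass $T^2_{\alpha, \beta}$ under the $\lambda$-integral (justified by the uniform bounds (\ref{eq04}) and the rapid decay of $\H_{\alpha, \beta}(f)$ against the polynomially growing Plancherel density), invoke the eigenvalue relation (\ref{eq02}), and then conclude via the isomorphism (\ref{eq2}). The paper instead stays on the physical side: it shows by integration by parts that $T_{\alpha, \beta}$ is skew-symmetric with respect to $A_{\alpha, \beta}(x)\,dx$, namely $\H_{\alpha, \beta}(T_{\alpha, \beta}f)(\lambda)=-\int_{\R} f(x)\,T_{\alpha, \beta}G^{\alpha, \beta}_\lambda(-x)\,A_{\alpha, \beta}(x)\,dx$, iterates this twice to shift $T^2_{\alpha, \beta}$ onto $G^{\alpha, \beta}_\lambda(-\cdot)$, and then applies (\ref{eq02}). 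The paper's argument needs neither the isomorphism theorem nor the eigenfunction estimates, only decay sufficient for integration by parts, and it exhibits the skew-adjointness of $T_{\alpha, \beta}$, which is useful in its own right; your argument leans on the stronger machinery (\ref{eq2}) and (\ref{eq04}), but in exchange it delivers the spectral representation $T^2_{\alpha, \beta}f=\H^{-1}_{\alpha, \beta}\bigl(-\lambda^2\,\H_{\alpha, \beta}(f)\bigr)$ and hence the membership $T^2_{\alpha, \beta}f\in\S^r_{\alpha, \beta}(\R)$ as a byproduct. One small remark: your closing claim that the invariance of $\S^r_{\alpha, \beta}(\R)$ under $T^2_{\alpha, \beta}$ is ``immediate'' from (\ref{eq05}) is not needed for your argument (the isomorphism already gives it), and as stated it glosses over the singularity of $A'_{\alpha, \beta}/A_{\alpha, \beta}$ at the origin, where the individually singular terms in (\ref{eq05}) only cancel after combination; better to drop that remark or phrase it as a consequence of your spectral formula.
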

\begin{proof}
Using the definition of $\H_{\alpha, \beta}$, integrating by parts and change of variables, we obtain
\begin{eqnarray*}
&& \H_{\alpha, \beta} \left( T_{\alpha, \beta} f \right) (\lambda ) \\
&& = \int_{\R} T_{\alpha, \beta}f(x)\; G^{\alpha, \beta}_\lambda(-x)\; A_{\alpha, \beta} (x) dx \\
&& = \int_{\R} \left( \frac{d}{dx} f(x)+ \frac{A'_{\alpha, \beta} (x)}{A_{\alpha, \beta} (x)} \left( \frac{f(x)-f(-x)}{2} \right) - \rho f(-x) \right) G^{\alpha, \beta}_\lambda(-x)\; A_{\alpha, \beta} (x) dx \\
&& = \int_{\R} \left( \frac{d}{dx} \left( f A_{\alpha, \beta} \right)(x) - A'_{\alpha, \beta} (x)\left(\frac{f(x)+f(-x)}{2} \right) - \rho A_{\alpha, \beta} (x) f(-x) \right) G^{\alpha, \beta}_\lambda(-x) dx \\ 
&& = - \int_{\R} f(x) \left(  \frac{d}{dx} G^{\alpha, \beta}_\lambda(-x)  + \frac{A'_{\alpha, \beta} (x)}{A_{\alpha, \beta} (x)} \left( \frac{G^{\alpha, \beta}_\lambda(-x) - G^{\alpha, \beta}_\lambda(x)}{2} \right) - \rho G^{\alpha, \beta}_\lambda(x) \right) A_{\alpha, \beta} (x) dx \\
&& =  - \int_{\R} f(x) \; T_{\alpha, \beta}G^{\alpha, \beta}_\lambda(-x)  \; A_{\alpha, \beta} (x) dx. 
\end{eqnarray*}
Now, using the relation (\ref{eq02}), we get
\begin{eqnarray*}
\H_{\alpha, \beta} \left( T^2_{\alpha, \beta} f \right) (\lambda ) 
& = & \int_{\R} T^2_{\alpha, \beta}f(x)\; G^{\alpha, \beta}_\lambda(-x)\; A_{\alpha, \beta} (x) dx \\
& = &  - \int_{\R} T_{\alpha, \beta}f(x) \; T_{\alpha, \beta}G^{\alpha, \beta}_\lambda(-x)  \; A_{\alpha, \beta} (x) dx \\
& = & \int_{\R} f(x) \; T^2_{\alpha, \beta}G^{\alpha, \beta}_\lambda(-x)  \; A_{\alpha, \beta} (x) dx \\
& = & - \lambda^2 \int_{\R} f(x) \; G^{\alpha, \beta}_\lambda(-x)  \; A_{\alpha, \beta} (x) dx \\
& = & - \lambda^2 \H_{\alpha, \beta} (f) (\lambda).
\end{eqnarray*}
\end{proof}

\subsection{The Jacobi--Cherednik heat kernel}

In this subsection, we consider the heat equation with Cauchy data for the Heckman--Opdam Laplacian. The Heckman--Opdam Laplacian (or the modified Laplacian) $\mathcal{D}_{\alpha, \beta}$ is defined by 
\[ \mathcal{D}_{\alpha, \beta}=\frac{1}{2} \left( T^2_{\alpha, \beta} - \rho^2\right).\]
The Jacobi--Cherednik heat operator $\mathrm{H}$ is defined by
\[ \mathrm{H}=\frac{\partial}{\partial t} -\mathcal{D}_{\alpha, \beta} \]
on $C^2(\Omega \times (0, T))$, where $\Omega \subset \mathbb{R}$ is open and $T>0$. Let $C_b(\mathbb{R})$ denote the space of continuous and bounded functions on $\mathbb{R}$. Here, we consider the homogeneous Cauchy problem: Given $f  \in C_b(\mathbb{R})$, find $u \in C^2(\mathbb{R} \times [0, T]) \cap C (\mathbb{R} \times [0, T])$, such that 
\begin{equation}\label{eq07}
\left\{\begin{array}{l}
\mathrm{H} u =0, \quad \text{on} \;\; \mathbb{R} \times (0, T), \\
u(\cdot, 0)=f.
\end{array}\right.
\end{equation}
Now, we define the function $\mathcal{F}_t^{\alpha, \beta}$ by
\begin{equation}\label{eq08}
\mathcal{F}_t^{\alpha, \beta}(x)= \H_{\alpha, \beta}^{-1}  \left(  e^{-\frac{t}{2}\left( \lambda^2 + \rho^2 \right)}  \right)(x), \quad x \in \R, \; t>0,
\end{equation}
which is the fundamental solution for the Jacobi--Cherednik heat equation. First, we prove the following proposition.
\begin{proposition}\label{pro2}
For any $t>0$, the function $\mathcal{F}_t^{\alpha, \beta} \in \S^r_{\alpha, \beta} (\mathbb{R})$, for every $0< r \leq 1$. Further, the function $(x, t) \mapsto \mathcal{F}_t^{\alpha, \beta} (x)$, is infinitely differentiable on $ \mathbb{R} \times (0, \infty)$ and is a solution of $\mathrm{H} u = 0$.
\end{proposition}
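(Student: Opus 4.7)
The plan is to address the three assertions in sequence, exploiting the spectral definition of $\mathcal{F}_t^{\alpha, \beta}$ and the isomorphism property of the Opdam--Cherednik transform. For the membership $\mathcal{F}_t^{\alpha, \beta} \in \S^r_{\alpha, \beta}(\R)$, I would invoke the topological isomorphism recorded in (\ref{eq2}), which reduces the claim to showing that $h_t(\lambda) := e^{-\frac{t}{2}(\lambda^2 + \rho^2)}$ lies in the extended Schwartz space $\S(\Omega_{\varepsilon(r)})$. Since $h_t$ is entire and for $z = \lambda + i\eta$ with $|\eta| \leq \varepsilon(r)$ one has $|h_t(z)| = e^{-\frac{t}{2}(\lambda^2 - \eta^2 + \rho^2)} \leq e^{\frac{t}{2}\varepsilon(r)^2} e^{-\frac{t}{2}\lambda^2}$, the Gaussian decay is uniform on the horizontal strip, and the same estimate (with extra polynomial factors in $z$) holds for every $\lambda$-derivative. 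Hence every seminorm of $\S(\Omega_{\varepsilon(r)})$ is finite, giving the first assertion for every $0 < r \leq 1$.

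For the joint $C^\infty$-smoothness on $\R \times (0, \infty)$, I would work with the explicit representation
\[ \mathcal{F}_t^{\alpha, \beta}(x) = \int_{\R} e^{-\frac{t}{2}(\lambda^2 + \rho^2)}\, G^{\alpha, \beta}_\lambda(x)\, d\sigma_{\alpha, \beta}(\lambda), \]
and differentiate under the integral sign. The pointwise bound (\ref{eq04}) controls $\partial^n_x G^{\alpha, \beta}_\lambda(x)$ by a polynomial in $\lambda$ (with $\lambda \in \R$), while the $t$-derivatives produce additional polynomial factors in $\lambda$; both are swamped by the Gaussian $e^{-\frac{t}{2}\lambda^2}$ on any compact subinterval $[t_1, t_2] \subset (0, \infty)$, and the Plancherel density $d\sigma_{\alpha, \beta}$ grows only polynomially away from the origin. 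A standard dominated convergence argument then legitimates interchanging derivatives and integral and yields continuity of all mixed partial derivatives.

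For the equation $\mathrm{H}\mathcal{F}_t^{\alpha, \beta} = 0$, I would apply $T^2_{\alpha, \beta}$ in $x$ inside the integral using (\ref{eq02}), which gives
\[ T^2_{\alpha, \beta} \mathcal{F}_t^{\alpha, \beta}(x) = -\int_{\R} \lambda^2 e^{-\frac{t}{2}(\lambda^2 + \rho^2)}\, G^{\alpha, \beta}_\lambda(x)\, d\sigma_{\alpha, \beta}(\lambda), \]
while the $t$-derivative produces
\[ \frac{\partial}{\partial t} \mathcal{F}_t^{\alpha, \beta}(x) = -\frac{1}{2}\int_{\R} (\lambda^2 + \rho^2) e^{-\frac{t}{2}(\lambda^2 + \rho^2)}\, G^{\alpha, \beta}_\lambda(x)\, d\sigma_{\alpha, \beta}(\lambda). \]
Subtracting $\tfrac{1}{2}(T^2_{\alpha, \beta} - \rho^2) \mathcal{F}_t^{\alpha, \beta}$ from $\partial_t \mathcal{F}_t^{\alpha, \beta}$ produces a spectral integrand that is identically zero, and the conclusion follows.

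The main obstacle will be the rigorous justification of applying the differential-difference operator $T^2_{\alpha, \beta}$ inside the integral, since $T^2_{\alpha, \beta}$ involves a reflection $S$ and the singular-at-zero factor $\frac{A'_{\alpha, \beta}}{A_{\alpha, \beta}}$ alongside $\partial_x$ and $\partial_x^2$. Using the reduced form in (\ref{eq05}), each constituent operator ($\partial_x$, $\partial_x^2$, the reflection, and multiplication by $\frac{A'_{\alpha, \beta}}{A_{\alpha, \beta}}$ or $\frac{d}{dx}\bigl(\frac{A'_{\alpha, \beta}}{A_{\alpha, \beta}}\bigr)$) interacts with the integrand through factors that are at worst polynomial in $\lambda$ (via (\ref{eq01}) and (\ref{eq04})), so the Gaussian decay continues to dominate and the interchange is legitimate. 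Once this is settled, the remaining verification is a one-line algebraic cancellation.
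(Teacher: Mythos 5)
Your proposal follows essentially the same route as the paper: express $\mathcal{F}_t^{\alpha,\beta}$ via the inverse Opdam--Cherednik transform, use the isomorphism (\ref{eq2}) after checking the Gaussian lies in $\S\left(\Omega_{\varepsilon(r)}\right)$, differentiate under the integral sign with the estimates (\ref{eq01})--(\ref{eq04}) and dominated convergence, and conclude $\mathrm{H}\mathcal{F}_t^{\alpha,\beta}=0$ from (\ref{eq02}). You merely spell out details the paper leaves implicit (the strip estimate for the Gaussian, polynomial growth of $d\sigma_{\alpha,\beta}$, and the justification for moving $T^2_{\alpha,\beta}$ inside the integral), so the argument is correct and matches the paper's proof.
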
 
\begin{proof}
Using the definition of the inverse Opdam--Cherednik transform, for every $x \in \mathbb{R}$ and $t>0$, we have 
\[ \mathcal{F}_t^{\alpha, \beta} (x) = \int_{\R} e^{-\frac{t}{2}\left( \lambda^2 + \rho^2 \right)} G^{\alpha, \beta}_\lambda(x)\; d\sigma_{\alpha, \beta}(\lambda). \] 
Since the function $\lambda \mapsto e^{-\frac{t}{2}\left( \lambda^2 + \rho^2 \right)}$ is in $\S\left(\Omega_{\varepsilon(r)}\right)$, by (\ref{eq2}), $\H_{\alpha, \beta}^{-1} \left( e^{-\frac{t}{2}\left( \lambda^2 + \rho^2 \right)} \right) = \mathcal{F}_t^{\alpha, \beta} \in \S^r_{\alpha, \beta} (\mathbb{R})$. Hence, $\mathcal{F}_t^{\alpha, \beta} \in L^1(\R, A_{\alpha, \beta})$. Now, from (\ref{eq04}), we get sufficient decay properties of the derivatives of $e^{-\frac{t}{2}\left( \lambda^2 + \rho^2 \right)} G^{\alpha, \beta}_\lambda(x)$. We get the necessary differentiations of $\mathcal{F}_t^{\alpha, \beta} (x)$ under the integral sign by using the dominated convergence theorem. Therefore, $\mathcal{F}_t^{\alpha, \beta} (x)$ is infinitely differentiable on $\mathbb{R} \times (0, \infty)$. Finally, interchanging the operator $T^2_{\alpha, \beta}$ with the integral sign and using (\ref{eq02}), we get $\mathrm{H} \mathcal{F}_t^{\alpha, \beta} (x) = 0$. 
\end{proof} 

Next, for $x, y \in \R$ and $t>0$, we define the heat kernel $p_t^{\alpha, \beta}(x, y)$ using the fundamental solution of the Jacobi--Cherednik heat equation and the generalized Jacobi--Cherednik translations. For $x \in \R$ and $f \in \S^r_{\alpha, \beta} (\mathbb{R})$, the Jacobi--Cherednik translation operator $\tau^{\alpha, \beta}_x$ is defined by
\begin{equation}\label{eq09}
\tau^{\alpha, \beta}_x f(y) = \int_{\mathbb{R}} \H_{\alpha, \beta} (f) (\lambda) \; G^{\alpha, \beta}_{-\lambda}(x) \; G^{\alpha, \beta}_{-\lambda}(y) \;  d\sigma_{\alpha, \beta}(\lambda), \quad \text{for all } y \in \R.
\end{equation}
For $x, y \in \R$ and $t>0$, the Jacobi--Cherednik heat kernel $p_t^{\alpha, \beta}(x, y)$ is given by
\begin{equation}\label{eq1}
p_t^{\alpha, \beta}(x, y) =  \tau^{\alpha, \beta}_{-x} \mathcal{F}_t^{\alpha, \beta}(y).
\end{equation}
In the following proposition, we present some basic properties of $p_t^{\alpha,\beta}$.
\begin{proposition}\label{pro3}
The Jacobi--Cherednik heat kernel $p_t^{\alpha, \beta}$ satisfies the following properties:
\begin{itemize}
\item[(a)] For all $x, y \in \R$ and $t>0$, we have
\begin{equation}\label{eq3}
p_t^{\alpha, \beta}(x, y)= \int_{\mathbb{R}} e^{-\frac{t}{2}\left( \lambda^2+\rho^2\right)}  G^{\alpha, \beta}_\lambda(x)\;  G^{\alpha, \beta}_\lambda(-y) \; d\sigma_{\alpha, \beta}(\lambda).
\end{equation}

\item[(b)] For all $x \in \R$ and $t>0$, $p_t^{\alpha, \beta}(x, 0) = \mathcal{F}_t^{\alpha, \beta}(x)$. 

\item[(c)] For all $x, y \in \R$ and $t>0$, $p_t^{\alpha, \beta}(x, y)=p_t^{\alpha, \beta}(-y, -x)$.

\item[(d)] For all $x \in \R$, $t>0$ and $\lambda \in \mathbb{C}$, we have $p_t^{\alpha, \beta}(x, \cdot) \in \S^r_{\alpha, \beta} (\mathbb{R})$ and 
\[ \H_{\alpha, \beta} \left( p_t^{\alpha, \beta}(x, - \; \cdot) \right) (\lambda) = e^{-\frac{t}{2}\left( \lambda^2+\rho^2\right)} G^{\alpha, \beta}_\lambda(x). \] 

\item[(e)] For all $x \in \R$ and $t>0$, we have
\begin{equation}\label{eq4}
\int_{\R} p_t^{\alpha, \beta}(x, y) A_{\alpha, \beta} (y) dy = e^{-\frac{t}{2} \rho^2}.
\end{equation}

\item[(f)] For fixed $y \in \R$, the function $u(x, t) = p_t^{\alpha, \beta}(x, y)$ solves the Jacobi--Cherednik heat equation $\mathrm{H} u=0$ on $\R \times (0, \infty)$.
\end{itemize}
\end{proposition}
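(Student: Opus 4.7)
The natural order is to establish (a) first and derive (b)--(f) as consequences. Substituting the definition of the translation operator (\ref{eq09}) into (\ref{eq1}) with $f=\mathcal{F}_t^{\alpha,\beta}$, and noting that the Opdam--Cherednik transform of $\mathcal{F}_t^{\alpha,\beta}$ is $e^{-\frac{t}{2}(\lambda^2+\rho^2)}$ by (\ref{eq08}), gives
\[
p_t^{\alpha,\beta}(x,y)=\int_{\mathbb{R}}e^{-\frac{t}{2}(\lambda^2+\rho^2)}G^{\alpha,\beta}_{-\lambda}(-x)\,G^{\alpha,\beta}_{-\lambda}(y)\,d\sigma_{\alpha,\beta}(\lambda).
\]
Applying the scaling relation $G^{\alpha,\beta}_\lambda(tx)=G^{\alpha,\beta}_{\lambda t}(x)$ with $t=-1$, which yields $G^{\alpha,\beta}_{-\lambda}(-x)=G^{\alpha,\beta}_\lambda(x)$ and $G^{\alpha,\beta}_{-\lambda}(y)=G^{\alpha,\beta}_\lambda(-y)$, converts this into the spectral formula (\ref{eq3}), establishing (a).

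For (b), set $y=0$ in (\ref{eq3}) and use $G^{\alpha,\beta}_\lambda(0)=1$, recovering the defining integral representation of $\mathcal{F}_t^{\alpha,\beta}$. For (c), the right-hand side of (\ref{eq3}) is invariant under the simultaneous replacement $(x,y)\mapsto(-y,-x)$, so the symmetry is immediate. For (d), the formula (\ref{eq3}) exhibits $p_t^{\alpha,\beta}(x,-\,\cdot)$ as the inverse Opdam--Cherednik transform of $h_{x,t}(\lambda):=e^{-\frac{t}{2}(\lambda^2+\rho^2)}G^{\alpha,\beta}_\lambda(x)$. To conclude that $p_t^{\alpha,\beta}(x,\cdot)\in\S^r_{\alpha,\beta}(\mathbb{R})$ via the isomorphism (\ref{eq2}), it suffices to check $h_{x,t}\in\S(\Omega_{\varepsilon(r)})$: on the strip $|\operatorname{Im}\lambda|\le\varepsilon(r)$ the estimate (\ref{eq01}) controls the $\lambda$-derivatives of $G^{\alpha,\beta}_\lambda(x)$ by a polynomial in $|\lambda|$ times the fixed constant $e^{\varepsilon(r)|x|}e^{-\rho|x|}$, and this polynomial growth is dominated by the Gaussian factor $e^{-\frac{t}{2}\lambda^2}$. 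The identity $\H_{\alpha,\beta}(p_t^{\alpha,\beta}(x,-\,\cdot))(\lambda)=h_{x,t}(\lambda)$ then follows directly from the inversion theorem.

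For (e), evaluate the transform formula in (d) at $\lambda=0$: the left-hand side becomes $\int_{\R}p_t^{\alpha,\beta}(x,-y)G^{\alpha,\beta}_0(-y)A_{\alpha,\beta}(y)\,dy$, while the right-hand side equals $e^{-\frac{t}{2}\rho^2}G^{\alpha,\beta}_0(x)$. Since $G^{\alpha,\beta}_0\equiv 1$ and $A_{\alpha,\beta}$ is even, the change of variable $y\mapsto -y$ produces (\ref{eq4}). For (f), differentiate (\ref{eq3}) in $t$ and apply $T^2_{\alpha,\beta}$ in $x$; the decay estimate (\ref{eq04}) together with the Gaussian factor justifies passing the derivatives under the integral via dominated convergence, after which the eigenfunction identity (\ref{eq02}) gives $\partial_tp_t^{\alpha,\beta}=\frac{1}{2}(T^2_{\alpha,\beta}-\rho^2)p_t^{\alpha,\beta}=\mathcal{D}_{\alpha,\beta}p_t^{\alpha,\beta}$, so $\mathrm{H}p_t^{\alpha,\beta}(\cdot,y)=0$.

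The only step that is not essentially a direct rewrite is (d): one must verify the rapid decay of $h_{x,t}$ and all its $\lambda$-derivatives on the strip $\Omega_{\varepsilon(r)}$ uniformly enough to invoke (\ref{eq2}). This is the main technical point; it is handled cleanly by (\ref{eq01}), but it is what legitimizes the use of the inverse transform in (a) and, downstream, the differentiation under the integral sign needed for (f).
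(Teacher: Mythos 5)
Your proposal is correct and follows essentially the same route as the paper: derive the spectral formula (a) from (\ref{eq1}), (\ref{eq09}) and $\H_{\alpha,\beta}(\mathcal{F}_t^{\alpha,\beta})(\lambda)=e^{-\frac{t}{2}(\lambda^2+\rho^2)}$, then read off (b), (c), (e), (f) and the transform identity in (d) from it. The only divergence is in (d), where you verify directly via (\ref{eq01}) that $\lambda\mapsto e^{-\frac{t}{2}(\lambda^2+\rho^2)}G^{\alpha,\beta}_\lambda(x)$ lies in $\S\left(\Omega_{\varepsilon(r)}\right)$ and then invoke the isomorphism (\ref{eq2}), whereas the paper instead cites the invariance of $\S^r_{\alpha,\beta}(\R)$ under the translations $\tau^{\alpha,\beta}_x$; your variant is a harmless, somewhat more self-contained justification of the same step.
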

\begin{proof}
(a) Since $G^{\alpha, \beta}_{\lambda}(\mu x)=G^{\alpha, \beta}_{\lambda \mu}(x)$, for $x, \lambda, \mu \in \R$, using (\ref{eq08}) and (\ref{eq09}), we get
\begin{eqnarray*}
p_t^{\alpha, \beta}(x, y) =  \tau^{\alpha, \beta}_{-x} \mathcal{F}_t^{\alpha, \beta}(y)
& = & \int_{\mathbb{R}} \H_{\alpha, \beta} \left(\mathcal{F}_t^{\alpha, \beta} \right) (\lambda) \; G^{\alpha, \beta}_{-\lambda}(-x) \; G^{\alpha, \beta}_{-\lambda}(y) \;  d\sigma_{\alpha, \beta}(\lambda) \\
& = & \int_{\mathbb{R}} e^{-\frac{t}{2}\left( \lambda^2+\rho^2\right)}  G^{\alpha, \beta}_{\lambda}(x) \; G^{\alpha, \beta}_{\lambda}(-y) \;  d\sigma_{\alpha, \beta}(\lambda).
\end{eqnarray*}
(b) Since $G^{\alpha, \beta}_\lambda(0)=1$, we obtain
\begin{eqnarray*}
p_t^{\alpha, \beta}(x, 0) =  \tau^{\alpha, \beta}_{-x} \mathcal{F}_t^{\alpha, \beta}(0)
& = & \int_{\mathbb{R}} e^{-\frac{t}{2}\left( \lambda^2+\rho^2\right)}  G^{\alpha, \beta}_{\lambda}(x) \; G^{\alpha, \beta}_{\lambda}(0) \;  d\sigma_{\alpha, \beta}(\lambda) \\
& = & \int_{\mathbb{R}} e^{-\frac{t}{2}\left( \lambda^2+\rho^2\right)}  G^{\alpha, \beta}_{\lambda}(x) \;  d\sigma_{\alpha, \beta}(\lambda) 
= \mathcal{F}_t^{\alpha, \beta}(x).
\end{eqnarray*}
(c) We have
\begin{eqnarray*}
p_t^{\alpha, \beta}(x, y) 
& = & \int_{\mathbb{R}} e^{-\frac{t}{2}\left( \lambda^2+\rho^2\right)}  G^{\alpha, \beta}_{\lambda}(x) \; G^{\alpha, \beta}_{\lambda}(-y) \;  d\sigma_{\alpha, \beta}(\lambda) \\
& = & \int_{\mathbb{R}} e^{-\frac{t}{2}\left( \lambda^2+\rho^2\right)}  G^{\alpha, \beta}_{\lambda}(-y) \;G^{\alpha, \beta}_{\lambda}(-(-x)) \;    d\sigma_{\alpha, \beta}(\lambda)
= p_t^{\alpha, \beta}(-y, -x).
\end{eqnarray*}
(d) From Proposition \ref{pro2}, we have $\mathcal{F}_t^{\alpha, \beta} \in \S^r_{\alpha, \beta} (\mathbb{R})$. Since the weighted Schwartz space $\S^r_{\alpha, \beta} (\mathbb{R})$ is invariant under these translation operators $\tau^{\alpha, \beta}_{x}$, using (\ref{eq1}), we obtain $p_t^{\alpha, \beta}(x, \cdot) \in \S^r_{\alpha, \beta} (\mathbb{R})$. Using the definition of $\H^{-1}_{\alpha, \beta}$, we get $\H^{-1}_{\alpha, \beta} \left( e^{-\frac{t}{2}\left( \lambda^2+\rho^2\right)} G^{\alpha, \beta}_\lambda(x) \right) (y)= p_t^{\alpha, \beta}(x, -y)$, and the second part follows immediately.\\
(e) From (d), we have 
\[ \H_{\alpha, \beta} \left( p_t^{\alpha, \beta}(x, - \; \cdot) \right) (\lambda) = e^{-\frac{t}{2}\left( \lambda^2+\rho^2\right)} G^{\alpha, \beta}_\lambda(x). \]
Therefore,
\[ \int_{\R} p_t^{\alpha, \beta}(x, -y) \; G^{\alpha, \beta}_\lambda(-y) \; A_{\alpha, \beta} (y) dy = e^{-\frac{t}{2}\left( \lambda^2+\rho^2\right)} G^{\alpha, \beta}_\lambda(x). \]
Using the change of variables, we get
\[\int_{\R} p_t^{\alpha, \beta}(x, y) \; G^{\alpha, \beta}_\lambda(y) \; A_{\alpha, \beta} (y) dy = e^{-\frac{t}{2}\left( \lambda^2+\rho^2\right)} G^{\alpha, \beta}_\lambda(x). \]
Since $G^{\alpha, \beta}_0(\cdot)=1$, by taking $\lambda=0$, we obtain
\[ \int_{\R} p_t^{\alpha, \beta}(x, y) A_{\alpha, \beta} (y) dy = e^{-\frac{t}{2} \rho^2}. \] 
(f) Finally, using differentiation under the integral sign in (a), interchanging the operator $T^2_{\alpha, \beta}$ with the integral sign, and using (\ref{eq02}), we get  $\mathrm{H} p_t^{\alpha, \beta}(x, y) = 0$.
\end{proof}

For $f \in \S^r_{\alpha, \beta} (\mathbb{R})$ and $t \geq 0$, we define the Jacobi--Cherednik heat semigroup $\left(P^{\alpha, \beta}_t, \;t \geq 0\right)$ by
\begin{equation}\label{eq5}
P^{\alpha, \beta}_t f(x)= 
\left\{\begin{array}{cl}
\displaystyle\int_{\R} p^{\alpha, \beta}_t(x, y) f(y) A_{\alpha, \beta} (y)dy  & \text { if } t>0, \\ 
f(x) & \text { if } t=0 .
\end{array}\right.
\end{equation}
From part (d) of Proposition \ref{pro3}, we have $p_t^{\alpha, \beta}(x, \cdot) \in L^1(\R, A_{\alpha, \beta})$ for each $t>0$. Since $f$ is bounded, $P^{\alpha, \beta}_t f$ is well defined and continuous on $\mathbb{R}$. Next, we show that $P^{\alpha, \beta}_t f(x)$ solves the Cauchy problem (\ref{eq07}).
\begin{theorem}\label{th1}
Let $f \in \S^r_{\alpha, \beta} (\mathbb{R})$. Then, the function $u(x, t)=P^{\alpha, \beta}_t f(x)$ on $\mathbb{R} \times (0, \infty)$, solves the Cauchy problem $(\ref{eq07})$. Also, $P^{\alpha, \beta}_t f$ satisfies the following properties:
\begin{itemize}
\item[(a)] $P^{\alpha, \beta}_t f \in \S^r_{\alpha, \beta} (\mathbb{R})$ for each $t>0$.

\item[(b)] $P^{\alpha, \beta}_{t+s} f =P^{\alpha, \beta}_t  P^{\alpha, \beta}_s f$ for every $s, t \geq 0$.

\item[(c)] $\| P^{\alpha, \beta}_t  f - f \|_{\infty} \rightarrow 0$ if $t \rightarrow 0$.
\end{itemize} 
\end{theorem}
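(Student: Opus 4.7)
The plan is to derive one master spectral identity and reduce every assertion to it. Substituting the spectral formula (\ref{eq3}) of $p^{\alpha, \beta}_t$ into the definition (\ref{eq5}) of $P^{\alpha, \beta}_t f$ and interchanging the two integrations via Fubini (justified by combining the exponential weight $e^{-t\lambda^2/2}$, the decay bound (\ref{eq04}) on $G^{\alpha, \beta}_\lambda$, and the integrability of $f A_{\alpha, \beta}$, which holds since $f \in \S^r_{\alpha, \beta}(\R)$), I would obtain
\begin{equation*}
P^{\alpha, \beta}_t f(x) = \int_{\R} e^{-\frac{t}{2}(\lambda^2 + \rho^2)} G^{\alpha, \beta}_\lambda(x)\, \H_{\alpha, \beta}(f)(\lambda)\, d\sigma_{\alpha, \beta}(\lambda),
\end{equation*}
equivalently $\H_{\alpha, \beta}(P^{\alpha, \beta}_t f)(\lambda) = e^{-\frac{t}{2}(\lambda^2 + \rho^2)} \H_{\alpha, \beta}(f)(\lambda)$.

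Once this identity is in hand, (a) follows from the isomorphism (\ref{eq2}): $\H_{\alpha, \beta}(f) \in \S(\Omega_{\varepsilon(r)})$, and multiplying by the entire function $e^{-t(\lambda^2+\rho^2)/2}$ (which is a Schwartz multiplier on any strip $\Omega_{\varepsilon(r)}$) keeps the product in $\S(\Omega_{\varepsilon(r)})$; applying $\H_{\alpha, \beta}^{-1}$ gives $P^{\alpha, \beta}_t f \in \S^r_{\alpha, \beta}(\R)$. Part (b) is immediate from the multiplicativity $e^{-(t+s)(\lambda^2+\rho^2)/2} = e^{-t(\lambda^2+\rho^2)/2} e^{-s(\lambda^2+\rho^2)/2}$ applied to the identity, together with the injectivity of $\H_{\alpha, \beta}$ on $\S^r_{\alpha, \beta}(\R)$.

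For (c), write
\begin{equation*}
P^{\alpha, \beta}_t f(x) - f(x) = \int_{\R} \left(e^{-\frac{t}{2}(\lambda^2+\rho^2)} - 1\right) G^{\alpha, \beta}_\lambda(x)\, \H_{\alpha, \beta}(f)(\lambda)\, d\sigma_{\alpha, \beta}(\lambda),
\end{equation*}
and use the uniform bound $|G^{\alpha, \beta}_\lambda(x)| \leq G^{\alpha, \beta}_0(x) \leq C$ together with the Schwartz decay of $\H_{\alpha, \beta}(f)$ against the total-variation measure $|d\sigma_{\alpha, \beta}|$ to invoke dominated convergence; the bound being independent of $x$ yields uniform convergence. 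Finally, for the Cauchy problem itself, Proposition \ref{pro3}(f) gives $\mathrm{H} p^{\alpha, \beta}_t(x, y) = 0$ pointwise, and the same exponential decay that justified Fubini above allows pulling $\partial_t$ and $T^2_{\alpha, \beta}$ under the integral in (\ref{eq5}), so $\mathrm{H}(P^{\alpha, \beta}_t f)(x) = 0$ on $\R \times (0, T)$; combined with the initial condition supplied by (c), this closes the argument.

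The main technical obstacle is making the Fubini and dominated-convergence manipulations rigorous, which comes down to controlling the Plancherel density $|C_{\alpha, \beta}(\lambda)|^{-2}$ by a polynomial in $|\lambda|$ and combining this with the Schwartz decay of $\H_{\alpha, \beta}(f)$ and the uniform bound (\ref{eq04}) on $G^{\alpha, \beta}_\lambda$ (and its $x$-derivatives). Everything else is bookkeeping around the master spectral identity.
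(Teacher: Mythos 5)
Your proposal is correct and follows essentially the same route as the paper: both derive the master identity $\H_{\alpha,\beta}(P^{\alpha,\beta}_t f)(\lambda)=e^{-\frac{t}{2}(\lambda^2+\rho^2)}\H_{\alpha,\beta}(f)(\lambda)$ via Fubini, deduce (a) from the isomorphism (\ref{eq2}), (b) from multiplicativity of the exponential plus injectivity of $\H_{\alpha,\beta}$ on $\S^r_{\alpha,\beta}(\R)$, (c) from the uniform bound on $G^{\alpha,\beta}_\lambda$ and dominated convergence, and the heat equation by differentiating under the integral (the paper does so in the spectral representation (\ref{eq8}) using (\ref{eq02}), while you do it in (\ref{eq5}) via Proposition \ref{pro3}(f) --- an inessential variation).
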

\begin{proof}
Using Proposition \ref{pro3} (a) and Fubini's theorem, we obtain
\begin{eqnarray}\label{eq6}
u(x, t) & = & P^{\alpha, \beta}_t f(x) \nonumber \\
& = & \int_{\R} p^{\alpha, \beta}_t(x, y) f(y) A_{\alpha, \beta} (y)dy  \nonumber \\
& = & \int_{\R} \left( \int_{\mathbb{R}} e^{-\frac{t}{2}\left( \lambda^2+\rho^2\right)}  G^{\alpha, \beta}_\lambda(x)\;  G^{\alpha, \beta}_\lambda(-y) \; d\sigma_{\alpha, \beta}(\lambda) \right) f(y) A_{\alpha, \beta} (y)dy \nonumber \\
& = & \int_{\R} e^{-\frac{t}{2}\left( \lambda^2+\rho^2\right)} G^{\alpha, \beta}_\lambda(x) \left(\int_{\mathbb{R}} f(y) \; G^{\alpha, \beta}_\lambda(-y) A_{\alpha, \beta} (y)dy \right) d\sigma_{\alpha, \beta}(\lambda) \nonumber \\
& = & \int_{\R} e^{-\frac{t}{2}\left( \lambda^2+\rho^2\right)}  \H_{\alpha, \beta} (f)(\lambda) \; G^{\alpha, \beta}_\lambda(x) \; d\sigma_{\alpha, \beta}(\lambda) \nonumber \\
& = & \H^{-1}_{\alpha, \beta} \left( e^{-\frac{t}{2}\left( (\cdot)^2+\rho^2\right)}  \H_{\alpha, \beta} (f) \right) (x).
\end{eqnarray}
For every $t>0$, the function $\lambda \mapsto e^{-\frac{t}{2}\left( \lambda^2+\rho^2\right)} \H_{\alpha, \beta} (f) (\lambda)$ is in $\S\left(\Omega_{\varepsilon(r)}\right)$. Using the relation (\ref{eq2}), from (\ref{eq6}), we get (a) and
\begin{equation}\label{eq7}
\H_{\alpha, \beta} \left( u( \cdot , t) \right) (\lambda)= e^{-\frac{t}{2}\left( \lambda^2+\rho^2\right)}  \H_{\alpha, \beta} (f)(\lambda).
\end{equation}
Now, we can interchange the operator $T^2_{\alpha, \beta}$ and $\frac{\partial}{\partial t}$ respectively with the integral sign, in the following relation
\begin{equation}\label{eq8}
u(x , t) = \int_{\R} e^{-\frac{t}{2}\left( \lambda^2+\rho^2\right)}  \H_{\alpha, \beta} (f)(\lambda) \; G^{\alpha, \beta}_\lambda(x) \; d\sigma_{\alpha, \beta}(\lambda).
\end{equation}
Hence, using the relation (\ref{eq02}), we obtain that $u(x, t)$ solves the Cauchy problem $(\ref{eq07})$. For every $s, t \geq 0$, from the relation (\ref{eq7}), we get
\begin{eqnarray*}
&& \H_{\alpha, \beta} \left( P^{\alpha, \beta}_{t+s} f \right) (\lambda) = e^{-\frac{(t+s)}{2}\left( \lambda^2+\rho^2\right)}  \H_{\alpha, \beta} (f)(\lambda)
= e^{-\frac{t}{2}\left( \lambda^2+\rho^2\right)} e^{-\frac{s}{2}\left( \lambda^2+\rho^2\right)}  \H_{\alpha, \beta} (f)(\lambda) \\
&& =  e^{-\frac{t}{2}\left( \lambda^2+\rho^2\right)} \H_{\alpha, \beta} \left(P^{\alpha, \beta}_s f \right)(\lambda)
= \H_{\alpha, \beta} \left( P^{\alpha, \beta}_t P^{\alpha, \beta}_s f \right)(\lambda).
\end{eqnarray*}
Using the injectivity of $\H_{\alpha, \beta}$ on $\S^r_{\alpha, \beta} (\mathbb{R})$, we get (b). Since $\H_{\alpha, \beta} (f) \in L^1(\R, \sigma_{\alpha, \beta})$, using the relation (\ref{eq8}) and $|G^{\alpha, \beta}_\lambda(x)| \leq 1$, we obtain the following estimation
\[ \| P^{\alpha, \beta}_t  f - f \|_{\infty} \leq \int_{\R}   |\H_{\alpha, \beta} (f)(\lambda)| \; \left| e^{-\frac{t}{2}\left( \lambda^2+\rho^2\right)} -1 \right| d|\sigma_{\alpha, \beta}|(\lambda). \]
This integral tends to $0$ if $t \rightarrow 0$, and this completes the proof.
\end{proof} 

Next, we show that the linear operators $\left(P^{\alpha, \beta}_t, \;t \geq 0\right)$ on $\S^r_{\alpha, \beta} (\mathbb{R})$ extend to a positive contraction semigroup on a subspace of the Banach space $\left(C_0(\mathbb{R}), \|\cdot \|_{\infty}\right)$, where $C_0(\mathbb{R})$ is the space of continuous functions on $\R$ vanishing at infinity.

\begin{theorem}\label{th2}
Let $C_{\rho,0}(\mathbb{R})$ be  a subspace of $C_0(\mathbb{R})$ consisting of functions satisfying the condition $(\ref{eq9})$. Then, we have the following properties: 
\begin{itemize}
\item[(a)] The differential-difference operator $T^2_{\alpha, \beta}$ on $C_{\rho,0}(\mathbb{R})$ is closable, and its closure $\overline{T^2_{\alpha, \beta}}$   generates a positive, strongly continuous contraction semigroup $\left(\overline{P}^{\alpha, \beta}_t, \;t \geq 0\right)$ on $C_{\rho,0}(\mathbb{R})$.

\item[(b)] The heat kernel $p^{\alpha, \beta}_t$ is non-negative on $\mathbb{R} \times \mathbb{R}$.

\item[(c)] The action of $\overline{P}^{\alpha, \beta}_t$   on $C_{\rho,0}(\mathbb{R})$ is given by 
\[ \overline{P}^{\alpha, \beta}_t f(x) =\int_{\R} p^{\alpha, \beta}_t(x, y) f(y) A_{\alpha, \beta} (y)dy.  \]
\end{itemize}
\end{theorem}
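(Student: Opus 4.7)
The plan is to apply the Hille--Yosida--Ray theorem for positive strongly continuous contraction semigroups on a closed subspace of $C_0(\mathbb R)$: a densely defined linear operator $A$ that satisfies the positive maximum principle and for which the range of $\mu I - A$ is dense for some $\mu > 0$ is automatically closable (being dissipative), and its closure generates such a semigroup. First I would take as initial domain $\mathcal D_0 := \mathcal S^r_{\alpha,\beta}(\mathbb R) \cap C_{\rho,0}(\mathbb R)$ and verify density of $\mathcal D_0$ in $C_{\rho,0}(\mathbb R)$ by approximation, using that compactly supported smooth functions automatically lie in $\mathcal S^r_{\alpha,\beta}$. The positive maximum principle for $T^2_{\alpha,\beta}$ on $\mathcal D_0$ is essentially Proposition \ref{pro1}: at a nonnegative absolute maximum $x_0$ of an $f \in C^2_\rho$, condition $(\ref{eq9})$ together with $f'(x_0) = 0$, $f''(x_0) \le 0$, and the monotonicity of $A'_{\alpha,\beta}/A_{\alpha,\beta}$ yields $T^2_{\alpha,\beta} f(x_0) \le 0$.

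For the range condition, given $\mu > 0$ and a target $g$ in a dense subclass (say $g \in \mathcal S^r_{\alpha,\beta}$), the equation $(\mu I - T^2_{\alpha,\beta}) f = g$ is formally solved by $f = \mathcal H_{\alpha,\beta}^{-1}\!\left( \mathcal H_{\alpha,\beta}(g)/(\mu + \lambda^2) \right)$, using the identity $(\ref{eq06})$. Since $\mathcal H_{\alpha,\beta}(g) \in \mathcal S(\Omega_{\varepsilon(r)})$ and division by $\mu + \lambda^2$ preserves the extended Schwartz class when $\mu > 0$, the isomorphism $(\ref{eq2})$ places $f$ back into $\mathcal S^r_{\alpha,\beta}$. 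Hence the range of $\mu I - T^2_{\alpha,\beta}$ on $\mathcal D_0$ contains a dense subset of $C_{\rho,0}(\mathbb R)$, and Hille--Yosida--Ray yields part (a).

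Parts (b) and (c) then follow as corollaries, and I would prove (b) first on the core to avoid circularity. Indeed, positivity of the generated semigroup together with Theorem \ref{th1} gives $\int_{\mathbb R} p^{\alpha,\beta}_t(x,y) f(y) A_{\alpha,\beta}(y)\,dy \ge 0$ for every nonnegative $f \in \mathcal D_0$; approximating a point mass at any $y_0 \in \mathbb R$ by such nonnegative test functions and using continuity of $p^{\alpha,\beta}_t$ in $y$ (inherited from Proposition \ref{pro3}(d)) yields $p^{\alpha,\beta}_t(x, y_0) \ge 0$ for all $(x, y_0) \in \mathbb R^2$. For (c), the identity $\overline P^{\alpha,\beta}_t f(x) = \int_{\mathbb R} p^{\alpha,\beta}_t(x,y) f(y) A_{\alpha,\beta}(y)\, dy$ holds on $\mathcal D_0$ by Theorem \ref{th1}; once (b) is known, Proposition \ref{pro3}(e) shows the integral operator is bounded on $C_0(\mathbb R)$ with norm $e^{-t\rho^2/2} \le 1$, so the identity extends from $\mathcal D_0$ to all of $C_{\rho,0}(\mathbb R)$ by density and continuity.

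The main obstacles I anticipate are twofold: (i) verifying sup-norm density of $\mathcal D_0$ inside the somewhat idiosyncratic subspace $C_{\rho,0}(\mathbb R)$ cut out by condition $(\ref{eq9})$, which is a pointwise constraint not obviously closed under uniform approximation and will require a careful choice of regularizers that preserve it; and (ii) transferring the range-density statement from the natural $L^2$/Schwartz setting of the Opdam--Cherednik transform into the sup-norm topology of $C_{\rho,0}(\mathbb R)$, since the resolvent estimates are most transparent in the Plancherel picture $(\ref{newpf})$ rather than in the $C_0$ topology required by Hille--Yosida--Ray.
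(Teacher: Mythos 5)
Your proposal follows essentially the same route as the paper: the Hille--Yosida--Phillips/Ray theorem for positive contraction semigroups, with the dispersivity (positive maximum principle) condition supplied by Proposition \ref{pro1}, the range condition $(\lambda I - T^2_{\alpha,\beta})\S^r_{\alpha,\beta}(\R)=\S^r_{\alpha,\beta}(\R)$ obtained via the identity (\ref{eq06}) and the isomorphism (\ref{eq2}) together with density of smooth compactly supported functions, positivity of $p^{\alpha,\beta}_t$ from applying the semigroup to nonnegative test functions and continuity in $y$, and (c) by density using Proposition \ref{pro3}(e). The two obstacles you flag (sup-norm density within $C_{\rho,0}(\R)$ and transferring the range density into the $C_0$ topology) are in fact passed over lightly in the paper's own proof as well, so your treatment is no less complete than the original.
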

\begin{proof}
(a) To prove the theorem and characterize generators of positive one-parameter contraction semigroups, we use a version of the Hille--Yosida--Phillip theorem (see \cite{are86, ros98}). To apply this theorem, we need to check the following two properties:
\begin{itemize}
\item[(i)] The operator $T^2_{\alpha, \beta}$ satisfies the following dispersivity condition: Let $f \in \S^r_{\alpha, \beta} (\mathbb{R})$ be a real-valued function, which satisfies the condition $(\ref{eq9})$ and attains an absolute maximum at $x_0 \in \R$. Then $T^2_{\alpha, \beta} f \left( x_0 \right) \leq 0$. 

\item[(ii)] For some $\lambda>0$, the range of $\lambda I - T^2_{\alpha, \beta}$ is dense in $C_{\rho, 0}(\mathbb{R})$.
\\
(i) follows from Proposition \ref{pro1}. Using (\ref{eq2}) and (\ref{eq06}), we prove (ii). Let $h \in  \S^r_{\alpha, \beta} (\mathbb{R})$, then $\H_{\alpha, \beta} (h) \in \S\left(\Omega_{\varepsilon(r)}\right)$. For every $\lambda>0$, the function $y \mapsto \frac{1}{\lambda+y^2}  \H_{\alpha, \beta} (h)(y)$ is in $\S\left(\Omega_{\varepsilon(r)}\right)$. Hence, from (\ref{eq2}), there exists $f \in \S^r_{\alpha, \beta} (\mathbb{R})$ such that $\H_{\alpha, \beta} (f)(y)=\frac{1}{\lambda+y^2} \H_{\alpha, \beta}(h)(y)$. Using (\ref{eq06}), we get $\H_{\alpha, \beta} (( \lambda I - T^2_{\alpha, \beta})f)(y)=\H_{\alpha, \beta} (h)(y)$ for every $y \in \mathbb{R}$. Thus $h=( \lambda I - T^2_{\alpha, \beta} )f$. Therefore, $(\lambda I - T^2_{\alpha, \beta} ) \S^r_{\alpha, \beta} (\mathbb{R})=\S^r_{\alpha, \beta} (\mathbb{R})$. Let $\mathcal{D}(\R)$ be the space of $C^{\infty}$ compactly supported functions on $\R$. Then $\mathcal{D}(\R) \subset C_c(\R)$. Now, the range of $\lambda I - T^2_{\alpha, \beta}$ is dense in $C_{\rho, 0}(\mathbb{R})$, which follows from the fact that $\mathcal{D}(\R) \subset \S^r_{\alpha, \beta} (\mathbb{R})$ and the density of $\mathcal{D}(\R)$ in $C_0(\mathbb{R})$.
\end{itemize}

(b) For every $f \in \S^r_{\alpha, \beta} (\mathbb{R})$ with $f \geq 0$, using the Hille--Yosida--Phillip theorem, we get
\[\int_{\R} p^{\alpha, \beta}_t(x, y) f(y) A_{\alpha, \beta} (y)dy = P^{\alpha, \beta}_t f(x) \geq 0,  \]
for all $(x, t) \in \R \times (0, \infty)$. Since for each fixed $x \in \mathbb{R}$ and $t>0$ the function $y \mapsto p^{\alpha, \beta}_t(x, y)$ is continuous on $\mathbb{R}$, the heat kernel $p^{\alpha, \beta}_t(x, y) \geq 0$ on $\mathbb{R} \times \mathbb{R}$ for each $t>0$.

(c) For $f \in C_{\rho, 0}(\mathbb{R})$, using Proposition \ref{pro3} (e) and the positivity of $p^{\alpha, \beta}_t$,  we obtain 
\[ \left| \int_{\R} p^{\alpha, \beta}_t(x, y) f(y) A_{\alpha, \beta} (y)dy \right| \leq \| f \|_{\infty} . \]
This completes the proof.
\end{proof}

\begin{corollary}
For each $t > 0$, the Jacobi--Cherednik heat kernel $p^{\alpha, \beta}_t(x, y)$ is strictly positive on $\mathbb{R} \times \mathbb{R}$.
\end{corollary}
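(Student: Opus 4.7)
The plan is to argue by contradiction, combining the non-negativity $p^{\alpha,\beta}_t \geq 0$ from Theorem~\ref{th2}(b) with the semigroup structure of Theorem~\ref{th1}(b) and the mass identity of Proposition~\ref{pro3}(e). My first step would be to convert the operator-level identity $P^{\alpha,\beta}_{s+t} = P^{\alpha,\beta}_{s}P^{\alpha,\beta}_{t}$ into the kernel-level Chapman--Kolmogorov relation
$$p^{\alpha,\beta}_{s+t}(x, y) = \int_{\R} p^{\alpha,\beta}_{s}(x, z)\, p^{\alpha,\beta}_{t}(z, y)\, A_{\alpha,\beta}(z)\, dz, \qquad s, t > 0,\; x, y \in \R.$$
This follows by testing $P^{\alpha,\beta}_{s+t}f = P^{\alpha,\beta}_{s}P^{\alpha,\beta}_{t}f$ on $f \in \S^r_{\alpha,\beta}(\R)$, applying Fubini's theorem (justified by non-negativity of the kernels), and using the density of $\S^r_{\alpha,\beta}(\R)$ together with the continuity of $p^{\alpha,\beta}_t$ in $y$ guaranteed by Proposition~\ref{pro3}(d).

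Next, assume toward a contradiction that $p^{\alpha,\beta}_{t_0}(x_0, y_0) = 0$ for some $t_0 > 0$ and $x_0, y_0 \in \R$. Taking $s = t = t_0/2$ in Chapman--Kolmogorov, the continuity of both factors in $z$ together with Theorem~\ref{th2}(b) force
$$p^{\alpha,\beta}_{t_0/2}(x_0, z)\, p^{\alpha,\beta}_{t_0/2}(z, y_0) = 0 \quad \text{for every } z \in \R.$$
Proposition~\ref{pro3}(e) gives $\int_{\R} p^{\alpha,\beta}_{t_0/2}(x_0, z)\, A_{\alpha,\beta}(z)\, dz = e^{-t_0 \rho^2/4} > 0$, so $p^{\alpha,\beta}_{t_0/2}(x_0, \cdot)$ is not identically zero; by continuity it is strictly positive on a nonempty open set $U \subset \R$, and hence $p^{\alpha,\beta}_{t_0/2}(z, y_0) = 0$ for every $z \in U$.

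The final step, and what I expect to be the main obstacle, is upgrading this \emph{local} vanishing on $U$ to a \emph{global} vanishing on $\R$. My plan is to establish real-analyticity of $z \mapsto p^{\alpha,\beta}_{t_0/2}(z, y_0)$: starting from the spectral representation in Proposition~\ref{pro3}(a), the Gaussian factor $e^{-t_0 \lambda^2/4}$ dominates the exponential estimate \eqref{eq01} for $G^{\alpha,\beta}_\lambda(z)$, while the hypergeometric expression $\varphi^{\alpha,\beta}_\lambda(z) = {}_2F_1(\cdot,\cdot;\cdot;-\sinh^{2} z)$ underlying $G^{\alpha,\beta}_\lambda$ exhibits holomorphic dependence on $z$ in the strip $|\operatorname{Im} z| < \pi/2$. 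Therefore the integral defining $p^{\alpha,\beta}_{t_0/2}(z, y_0)$ extends to a function holomorphic in $z$ on that strip. Connectedness of $\R$ then forces $p^{\alpha,\beta}_{t_0/2}(\cdot, y_0) \equiv 0$ on $\R$. Combining the symmetry $p^{\alpha,\beta}_t(z, y_0) = p^{\alpha,\beta}_t(-y_0, -z)$ of Proposition~\ref{pro3}(c), the evenness of $A_{\alpha,\beta}$, and Proposition~\ref{pro3}(e) with $x = -y_0$ produces $\int_{\R} p^{\alpha,\beta}_{t_0/2}(z, y_0)\, A_{\alpha,\beta}(z)\, dz = e^{-t_0 \rho^2/4} > 0$, the desired contradiction. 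The delicate point is precisely this unique-continuation step: mere continuity of $p^{\alpha,\beta}_t$ would allow it to vanish on an open set while being positive elsewhere, so one must exploit the explicit Opdam hypergeometric structure of the kernel rather than only its qualitative properties.
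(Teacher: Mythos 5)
Your proposal is correct in substance, but it takes a genuinely different route from the paper. The paper's own proof is short and spectral: starting from the nonnegativity of $p^{\alpha,\beta}_t$ in Theorem \ref{th2}(b) and the representation \eqref{eq3}, it uses that $G^{\alpha,\beta}_\lambda$ is real and strictly positive for real $\lambda$, that the integrand equals $e^{-t\rho^2/2}>0$ at $\lambda=0$, and continuity, to conclude the integral cannot vanish (implicitly one should pass to the real part, since the density $1-\rho/(i\lambda)$ of $d\sigma_{\alpha,\beta}$ is complex; what survives is the positive density $\bigl(8\pi|C_{\alpha,\beta}(\lambda)|^2\bigr)^{-1}$). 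You instead argue probabilistically: Chapman--Kolmogorov from Theorem \ref{th1}(b), the mass identity \eqref{eq4} transferred to the first variable via Proposition \ref{pro3}(c) and the evenness of $A_{\alpha,\beta}$, and a unique-continuation step. Your Chapman--Kolmogorov derivation and the localization of the zero set are sound and can be justified exactly as you sketch (Tonelli with the nonnegative kernels, Proposition \ref{pro3}(d) for continuity, density of $\mathcal{D}(\mathbb{R})$). The one step that needs input beyond what the paper states is the analyticity of $z\mapsto p^{\alpha,\beta}_{t_0/2}(z,y_0)$: the estimate \eqref{eq01} is stated only for real $x$, so to push holomorphy through the $\lambda$-integral you need bounds on $G^{\alpha,\beta}_\lambda(z)$, locally uniform for $z$ in the strip, growing at most exponentially in $|\lambda|$; these are standard (Schapira) and are the same facts the paper itself invokes in Section \ref{sec5} when extending $P^{\alpha,\beta}_t f$ and the kernel analytically, so this is a caveat rather than a gap. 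What your approach buys is robustness: it deduces strict positivity from nonnegativity, mass conservation up to $e^{-t\rho^2/2}$, the semigroup law, and analyticity alone, without needing positivity of the spectral measure; the cost is an argument considerably longer than the paper's few lines.
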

\begin{proof}
From Theorem \ref{th2} (b), the heat kernel $p^{\alpha, \beta}_t$ is non-negative on $\R \times \R$. For $\lambda \in \R$, the function $G^{\alpha, \beta}_\lambda$ is real and strictly positive. Now, we have
\[ p_t^{\alpha, \beta}(x, y)= \int_{\mathbb{R}} e^{-\frac{t}{2}\left( \lambda^2+\rho^2\right)}  G^{\alpha, \beta}_\lambda(x)\;  G^{\alpha, \beta}_\lambda(-y) \; d\sigma_{\alpha, \beta}(\lambda). \]
Also, for $\lambda = 0$, the integrand  
\[ e^{-\frac{t}{2}\left( \lambda^2+\rho^2\right)}  G^{\alpha, \beta}_\lambda(x)\;  G^{\alpha, \beta}_\lambda(-y)= e^{-\frac{t}{2} \rho^2}  G^{\alpha, \beta}_0(x)\;  G^{\alpha, \beta}_0(-y) = e^{-\frac{t}{2}  \rho^2 } >0,  \]
for all $t>0$. Thus, the integrand on the right side is continuous, non-negative and not identically zero. Therefore the integral itself must be strictly positive. 
\end{proof}  

\begin{theorem}\label{th3}
The Jacobi--Cherednik heat semigroup $\left(P^{\alpha, \beta}_t, \;t \geq 0\right)$ defines a strongly continuous, positivity-preserving contraction semigroup on $L^2(\R, A_{\alpha, \beta})$. The generator of this semigroup is the self-adjoint closure $\overline{T^2_{\alpha, \beta}}$ of $T^2_{\alpha, \beta}$ on $L^2(\R, A_{\alpha, \beta})$. Also, the action of $\overline{P}^{\alpha, \beta}_t$ on $L^2(\R, A_{\alpha, \beta})$ is given by
\begin{equation}\label{eq10}
\overline{P}^{\alpha, \beta}_t f(x) = \int_{\mathbb{R}} e^{-\frac{t}{2}\left( \lambda^2+\rho^2\right)} \; \H_{\alpha, \beta} (f) (\lambda)\;  G^{\alpha, \beta}_\lambda(x)\; d\sigma_{\alpha, \beta}(\lambda).
\end{equation}
\end{theorem}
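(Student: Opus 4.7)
The plan is to push everything through the Opdam--Cherednik transform $\H_{\alpha,\beta}$, which by (\ref{newpf}) is an isometric isomorphism $L^2(\R, A_{\alpha,\beta}) \to L^2(\R, \sigma_{\alpha,\beta})$, and under which the heat semigroup is diagonalised. Specifically, formula (\ref{eq6}) from the proof of Theorem \ref{th1} shows that for $f \in \S^r_{\alpha,\beta}(\R)$ one has $\H_{\alpha,\beta}(P^{\alpha,\beta}_t f)(\lambda) = M_t(\lambda) \, \H_{\alpha,\beta}(f)(\lambda)$, with $M_t(\lambda) := e^{-\frac{t}{2}(\lambda^2 + \rho^2)}$. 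Since $\sup_{\lambda \in \R} |M_t(\lambda)| \leq e^{-\frac{t}{2}\rho^2} \leq 1$ for every $t \geq 0$, multiplication by $M_t$ is a contraction on $L^2(\R, \sigma_{\alpha,\beta})$; combined with Plancherel and the density of $\S^r_{\alpha,\beta}(\R)$ in $L^2(\R, A_{\alpha,\beta})$, this produces a unique contraction $\overline{P}^{\alpha,\beta}_t$ on $L^2(\R, A_{\alpha,\beta})$ satisfying the same intertwining identity, and (\ref{eq10}) follows upon applying $\H^{-1}_{\alpha,\beta}$.

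Next I would verify the structural semigroup properties on the spectral side. The identity $\overline{P}^{\alpha,\beta}_{t+s} = \overline{P}^{\alpha,\beta}_t \overline{P}^{\alpha,\beta}_s$ is immediate from $M_{t+s} = M_t M_s$ and the injectivity of $\H_{\alpha,\beta}$. Strong continuity reduces by Plancherel to
\[
\| \overline{P}^{\alpha,\beta}_t f - f \|^2_{L^2(A_{\alpha,\beta})} = \int_\R |M_t(\lambda)-1|^2 \, |\H_{\alpha,\beta}(f)(\lambda)|^2 \, d\sigma_{\alpha,\beta}(\lambda),
\]
which tends to zero as $t \to 0^+$ by dominated convergence, with dominating function $4\, |\H_{\alpha,\beta}(f)|^2 \in L^1(\sigma_{\alpha,\beta})$. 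For positivity preservation, I would first observe that $p^{\alpha,\beta}_t(x,\cdot) \in L^2(\R, A_{\alpha,\beta})$ for each fixed $x$, via Proposition \ref{pro3}(d) and the fact that $\lambda \mapsto e^{-\frac{t}{2}(\lambda^2+\rho^2)}\,G^{\alpha,\beta}_\lambda(x) \in L^2(\sigma_{\alpha,\beta})$; then I would approximate a non-negative $f \in L^2(\R, A_{\alpha,\beta})$ by elements of $\S^r_{\alpha,\beta}(\R)$, pass to the limit in (\ref{eq5}), and conclude $\overline{P}^{\alpha,\beta}_t f \geq 0$ using the strict positivity of $p^{\alpha,\beta}_t$ from the preceding corollary.

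The final and most delicate ingredient is identification of the generator. On the spectral side the semigroup acts as multiplication by $M_t$, whose infinitesimal generator is multiplication by the real measurable function $-\tfrac{1}{2}(\lambda^2 + \rho^2)$ on its natural domain $\{g \in L^2(\sigma_{\alpha,\beta}) : (\lambda^2 + \rho^2) g \in L^2(\sigma_{\alpha,\beta})\}$, which is self-adjoint by the spectral theorem for multiplication operators. Pulling back through $\H_{\alpha,\beta}$ and invoking (\ref{eq06}), this operator is precisely the self-adjoint realisation of $\frac{1}{2}(T^2_{\alpha,\beta} - \rho^2\,I) = \mathcal{D}_{\alpha,\beta}$, so the generator is the self-adjoint closure $\overline{T^2_{\alpha,\beta}}$ (shifted by the bounded scalar $-\rho^2/2$) as stated. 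The main obstacle I expect is establishing essential self-adjointness of $T^2_{\alpha,\beta}$ on the core $\S^r_{\alpha,\beta}(\R)$; this is handled by the topological isomorphism (\ref{eq2}) between $\S^r_{\alpha,\beta}(\R)$ and $\S(\Omega_{\varepsilon(r)})$, which guarantees that the image under $\H_{\alpha,\beta}$ is dense in the maximal domain of the multiplication operator, so that no larger self-adjoint extension is possible.
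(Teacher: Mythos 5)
Your proposal is correct and runs along the same axis as the paper's own proof: conjugate by the Opdam--Cherednik transform, so that $P^{\alpha,\beta}_t$ becomes multiplication by $M_t(\lambda)=e^{-\frac{t}{2}(\lambda^2+\rho^2)}$, and then use the Plancherel identity (\ref{newpf}) together with the density of $\S^r_{\alpha,\beta}(\R)$ in $L^2(\R,A_{\alpha,\beta})$ to extend and to get the semigroup law and strong continuity. The difference is one of completeness and economy. The paper's proof extends $P^{\alpha,\beta}_t$ by taking a sequence $f_n\to f$ in $L^2$, extracting an a.e.\ convergent subsequence of $\overline{P}^{\alpha,\beta}_t f_n$ and identifying the limit with (\ref{eq10}) by dominated convergence, and then only verifies the semigroup property and strong continuity; the contraction bound, positivity preservation, and the identification of the generator are asserted but not argued. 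You instead define the extension directly as $\H_{\alpha,\beta}^{-1}\,M_t\,\H_{\alpha,\beta}$ (cleaner, no subsequence needed), get the contraction from $\sup_\lambda|M_t(\lambda)|\le e^{-t\rho^2/2}\le 1$, obtain positivity by first noting $p^{\alpha,\beta}_t(x,\cdot)\in L^2(\R,A_{\alpha,\beta})$ so that the kernel representation (\ref{eq5}) survives the $L^2$ limit, and identify the generator via the spectral theorem for the multiplication operator $-\tfrac12(\lambda^2+\rho^2)$ pulled back through the transform, using (\ref{eq2}) to see that $\S^r_{\alpha,\beta}(\R)$ is a core. That last step is the only place where a sentence more of care is warranted: ``dense in the maximal domain'' should be density in the graph norm of the multiplication operator, which is standard for $\S(\Omega_{\varepsilon(r)})$ but deserves explicit mention. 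You are also right, and more precise than the statement itself, in observing that the generator is really $\mathcal{D}_{\alpha,\beta}=\tfrac12\bigl(\overline{T^2_{\alpha,\beta}}-\rho^2 I\bigr)$, i.e.\ the self-adjoint closure of $T^2_{\alpha,\beta}$ only up to the harmless scaling by $\tfrac12$ and shift by $-\rho^2/2$; the paper glosses over this. In short, your route proves the full statement, including the parts the paper's proof leaves implicit.
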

\begin{proof}
For $f \in \S^r_{\alpha, \beta} (\mathbb{R})$, using (\ref{eq6}), we have $\overline{P}^{\alpha, \beta}_t f = P^{\alpha, \beta}_t f$. Let $f \in L^2(\R, A_{\alpha, \beta})$. Then, there exists a sequence $\{ f_n \}_{n \in \N}$ in $\S^r_{\alpha, \beta} (\mathbb{R})$ such that $f_n \rightarrow f$ in $L^2(\R, A_{\alpha, \beta})$. Then, using the Plancherel theorem, we obtain that $\left\{ \overline{P}^{\alpha, \beta}_t f_n \right\}_{n \in \N}$ is a Cauchy sequence in $L^2(\R, A_{\alpha, \beta})$ and hence it converges in $L^2(\R, A_{\alpha, \beta})$. Then, for a subsequence $\{n_k\}$, the limit
\[ \lim _{n_k \rightarrow \infty} \int_{\mathbb{R}} e^{-\frac{t}{2}\left( \lambda^2+\rho^2\right)} \; \H_{\alpha, \beta} (f_{n_k}) (\lambda)\;  G^{\alpha, \beta}_\lambda(x)\; d\sigma_{\alpha, \beta}(\lambda) = l(x) \]
exists for almost all $x \in \mathbb{R}$. Since $\H_{\alpha, \beta} (f_{n_k}) \rightarrow \H_{\alpha, \beta} (f)$ in $L^2(\R, \sigma_{\alpha, \beta})$, using the dominated convergence theorem, we get 
\[ l(x) = \int_{\mathbb{R}} e^{-\frac{t}{2}\left( \lambda^2+\rho^2\right)} \; \H_{\alpha, \beta} (f) (\lambda)\;  G^{\alpha, \beta}_\lambda(x)\; d\sigma_{\alpha, \beta}(\lambda). \] 
Therefore, $\overline{P}^{\alpha, \beta}_t f$ defined by (\ref{eq10}), is in $L^2(\R, A_{\alpha, \beta})$. Then, using the density of $\S^r_{\alpha, \beta} (\mathbb{R})$ in $ L^2(\R, A_{\alpha, \beta})$ and the semigroup property of $\left(P^{\alpha, \beta}_t, \;t \geq 0\right)$, we obtain that   $\left(\overline{P}^{\alpha, \beta}_t, \;t \geq 0\right)$ is a semigroup of operators on $L^2(\R, A_{\alpha, \beta})$. Also, we have 
\[ \left\| \H^{-1}_{\alpha, \beta} \left( e^{-\frac{t}{2}\left( \lambda^2+\rho^2\right)} \; \H_{\alpha, \beta} (f)\right) - f  \right\|_{L^2(\R, A_{\alpha, \beta})} = \left\| e^{-\frac{t}{2}\left( \lambda^2+\rho^2\right)} \; \H_{\alpha, \beta} (f) - \H_{\alpha, \beta} (f)  \right\|_{L^2(\R, \sigma_{\alpha, \beta})} \rightarrow 0, \]
as $t \rightarrow 0$. Hence, $\left(\overline{P}^{\alpha, \beta}_t, \;t \geq 0\right)$ is a strongly continuous semigroup on $L^2(\R, A_{\alpha, \beta})$.
\end{proof}

\section{Image of $L^2(\R, A_{\alpha, \beta})$ under the Jacobi--Cherednik heat semigroup}\label{sec5}

In this section, we characterize the image of the space $L^2(\R, A_{\alpha, \beta})$ under the Jacobi--Cherednik heat semigroup $\left(P^{\alpha, \beta}_t, \;t \geq 0\right)$ given by (\ref{eq5}). From (\ref{eq6}), we can write $P^{\alpha, \beta}_t f$ as
\begin{equation}\label{eq15}
P^{\alpha, \beta}_t f(x) = \int_{\R} e^{-\frac{t}{2}\left( \lambda^2+\rho^2\right)}  \H_{\alpha, \beta} (f)(\lambda) \; G^{\alpha, \beta}_\lambda(x) \; d\sigma_{\alpha, \beta}(\lambda).
\end{equation}
For $f \in L^2(\R, A_{\alpha, \beta})$, using $|G^{\alpha, \beta}_\lambda(x) | \leq 1$, H\"older's inequality and Plancherel's formula (\ref{eq03}), we obtain
\begin{eqnarray*}
\left| P^{\alpha, \beta}_t f(x) \right| 
& \leq & \int_{\R} e^{-\frac{t}{2}\left( \lambda^2+\rho^2\right)}  \left| \H_{\alpha, \beta} (f)(\lambda) \right|  d\sigma_{\alpha, \beta}(\lambda) \\
& \leq &  \left( \int_{\R} e^{-t \left( \lambda^2+\rho^2\right)} d\sigma_{\alpha, \beta}(\lambda) \right)^{1/2}
\left( \int_{\R} \left| \H_{\alpha, \beta} (f)(\lambda) \right|^2  d\sigma_{\alpha, \beta}(\lambda) \right)^{1/2} \\
& \leq & c_{\alpha, \beta} \; \Vert f \Vert_{ L^2(\R, A_{\alpha, \beta})},
\end{eqnarray*}
where $c_{\alpha, \beta}$ is a positive constant. Now, using Morera's and Dominated convergence theorems, we see that $P^{\alpha, \beta}_t f$ can be extended as an analytic function on $\C$. Therefore, $P^{\alpha, \beta}_t \left( L^2(\R, A_{\alpha, \beta}) \right)$ is a subspace of space of all analytic functions on $\C$. Further, it is obvious that $P^{\alpha, \beta}_t : L^2(\R, A_{\alpha, \beta}) \to P^{\alpha, \beta}_t \left( L^2(\R, A_{\alpha, \beta}) \right) $ is linear and bijective. Hence, $P^{\alpha, \beta}_t \left( L^2(\R, A_{\alpha, \beta})\right)$ can be made into a Hilbert space simply by transferring the Hilbert space structure of $ L^2(\R, A_{\alpha, \beta})$ to $P^{\alpha, \beta}_t \left( L^2(\R, A_{\alpha, \beta}) \right)$ so that the Jacobi--Cherednik heat semigroup $ P^{\alpha, \beta}_t $ is an isometric isomorphism from $L^2(\R, A_{\alpha, \beta})$ onto $P^{\alpha, \beta}_t \left( L^2(\R, A_{\alpha, \beta}) \right)$. We give the inner product structure in $P^{\alpha, \beta}_t \left( L^2(\R, A_{\alpha, \beta}) \right)$ as follows:
\[ \left\langle  P^{\alpha, \beta}_t f, P^{\alpha, \beta}_t g \right\rangle_{P^{\alpha, \beta}_t \left( L^2(\R, A_{\alpha, \beta}) \right)}:= \langle f , g \rangle_{L^2(\R, A_{\alpha, \beta})} \quad \text{for} \; f,g \in L^2(\R, A_{\alpha, \beta}). \]
Here, we prove that the space $P^{\alpha, \beta}_t \left( L^2(\R, A_{\alpha, \beta}) \right)$ is a reproducing kernel Hilbert space. 

The convolution product associated with the Opdam--Cherednik transform is defined for two suitable functions $f$ and $g$  by \cite{ank12}
\[ (f *_{\alpha, \beta} g) (x)=\int_\R  f(y) \;\tau_x^{\alpha, \beta} g(-y) \; A_{\alpha, \beta}(y) \; dy.  \]
Also, we have
\begin{equation}\label{eq16}
\H_{\alpha, \beta} (f *_{\alpha, \beta} g)= \H_{\alpha, \beta} (f) \; \H_{\alpha, \beta} (g).
\end{equation}
From the relation (\ref{eq08}), we get
\begin{equation}\label{eq17}
\H_{\alpha, \beta} \left(\mathcal{F}_t^{\alpha, \beta} \right) (\lambda) =  e^{-\frac{t}{2}\left( \lambda^2 + \rho^2 \right)}, \quad \lambda \in \R, \; t>0.
\end{equation}
Now, using (\ref{eq17}) in the relation (\ref{eq15}), we obtain
\begin{eqnarray}\label{eq18}
P^{\alpha, \beta}_t f(x) 
& = &  \int_{\R} \H_{\alpha, \beta} \left(\mathcal{F}_t^{\alpha, \beta} \right) (\lambda) \;   \H_{\alpha, \beta} (f)(\lambda) \; G^{\alpha, \beta}_\lambda(x) \; d\sigma_{\alpha, \beta}(\lambda) \nonumber \\ 
& = & \int_{\R} \H_{\alpha, \beta} \left( f *_{\alpha, \beta} \mathcal{F}_t^{\alpha, \beta} \right) (\lambda) \; G^{\alpha, \beta}_\lambda(x) \; d\sigma_{\alpha, \beta}(\lambda) \nonumber \\ 
& = & \H_{\alpha, \beta}^{-1} \left( \H_{\alpha, \beta} \left( f *_{\alpha, \beta} \mathcal{F}_t^{\alpha, \beta} \right) \right) (x) \nonumber \\ 
& = & \left( f *_{\alpha, \beta} \mathcal{F}_t^{\alpha, \beta} \right) (x).
\end{eqnarray}
For $f \in  L^2(\R, A_{\alpha, \beta})$, $\left( f *_{\alpha, \beta} \mathcal{F}_t^{\alpha, \beta} \right) (x)$ has holomorphic extension on $\C$, where 
\[ \left( f *_{\alpha, \beta} \mathcal{F}_t^{\alpha, \beta} \right) (x) =\int_\R  f(y) \;\tau_x^{\alpha, \beta} \mathcal{F}_t^{\alpha, \beta}(-y) \; A_{\alpha, \beta}(y) \; dy. \]
Let $\mathcal{O}(\C) $ be the space of all analytic functions on $\C$. The heat semigroup $P^{\alpha, \beta}_t : L^2(\R, A_{\alpha, \beta}) \to \mathcal{O}(\C) $ such that $f \mapsto f *_{\alpha, \beta} \mathcal{F}_t^{\alpha, \beta}$, is one to one. 
\begin{theorem}\label{th5}
The space $P^{\alpha, \beta}_t \left( L^2(\R, A_{\alpha, \beta}) \right)$ is a reproducing kernel Hilbert space with kernel
\[ K^{\alpha, \beta}_t (z,u) = p^{\alpha, \beta}_{2t} (-z,u), \quad z,u \in \C. \]
\end{theorem}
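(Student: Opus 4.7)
The plan is to exploit the isometric isomorphism $P^{\alpha,\beta}_t:L^2(\R, A_{\alpha,\beta}) \to P^{\alpha,\beta}_t\left(L^2(\R, A_{\alpha,\beta})\right)$ constructed just before the statement and pull the reproducing-kernel property back to an inner-product identity on $L^2(\R, A_{\alpha,\beta})$ that can be verified by Plancherel. Concretely, for each $u \in \C$ I must show (i) that $K^{\alpha,\beta}_t(\cdot, u) = p^{\alpha,\beta}_{2t}(-\cdot, u)$ lies in $P^{\alpha,\beta}_t\left(L^2(\R, A_{\alpha,\beta})\right)$, and (ii) that $F(u) = \langle F, K^{\alpha,\beta}_t(\cdot, u) \rangle_{P^{\alpha,\beta}_t(L^2(\R, A_{\alpha,\beta}))}$ for every $F = P^{\alpha,\beta}_t f$.

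I would first fix $u \in \R$ (the extension to $u \in \C$ then follows from the holomorphicity discussion preceding the theorem) and introduce the natural candidate preimage
\[
k_u(y) := p^{\alpha,\beta}_t(u, -y).
\]
By Proposition~\ref{pro3}(d) one has $\H_{\alpha,\beta}(k_u)(\lambda) = e^{-\frac{t}{2}(\lambda^2+\rho^2)} G^{\alpha,\beta}_\lambda(u)$, which lies in $L^2(\R, \sigma_{\alpha,\beta})$ thanks to the exponential factor and the bound $|G^{\alpha,\beta}_\lambda(u)| \leq 1$; the Plancherel isometry then places $k_u$ in $L^2(\R, A_{\alpha,\beta})$.

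The reproducing identity is then obtained as follows. The spectral formula (\ref{eq15}) gives $F(u) = \int_{\R} e^{-\frac{t}{2}(\lambda^2+\rho^2)} \H_{\alpha,\beta}(f)(\lambda) \, G^{\alpha,\beta}_\lambda(u) \, d\sigma_{\alpha,\beta}(\lambda)$, and the integrand is recognised as the Plancherel pairing between $\H_{\alpha,\beta}(f)$ and $\H_{\alpha,\beta}(k_u)$; the polarised form of (\ref{newpf}) therefore yields $F(u) = \langle f, k_u \rangle_{L^2(\R, A_{\alpha,\beta})}$. By definition of the transferred inner product, this equals $\langle F, P^{\alpha,\beta}_t k_u \rangle_{P^{\alpha,\beta}_t(L^2(\R, A_{\alpha,\beta}))}$, so (ii) and (i) both reduce to checking that $P^{\alpha,\beta}_t k_u = K^{\alpha,\beta}_t(\cdot, u)$.

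This last identification is the delicate step. Using (\ref{eq5}) I write
\[
P^{\alpha,\beta}_t k_u(z) = \int_{\R} p^{\alpha,\beta}_t(z, y) \, p^{\alpha,\beta}_t(u, -y) \, A_{\alpha,\beta}(y) \, dy,
\]
and, after rewriting $p^{\alpha,\beta}_t(u, -y) = p^{\alpha,\beta}_t(y, -u)$ by Proposition~\ref{pro3}(c), the integral becomes a Chapman--Kolmogorov expression which evaluates, via the semigroup identity $P^{\alpha,\beta}_t P^{\alpha,\beta}_t = P^{\alpha,\beta}_{2t}$ of Theorem~\ref{th1}(b), to $p^{\alpha,\beta}_{2t}(z, -u)$. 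Another application of Proposition~\ref{pro3}(c) together with the $(x, y) \mapsto (y, x)$ symmetry of $p^{\alpha,\beta}_t$ (which follows from $\H_{\alpha,\beta}$ being an isomorphism, since both $\int e^{-\frac{t}{2}(\lambda^2+\rho^2)} G^{\alpha,\beta}_\lambda(x) G^{\alpha,\beta}_\lambda(-y) \, d\sigma_{\alpha,\beta}(\lambda)$ and $\int e^{-\frac{t}{2}(\lambda^2+\rho^2)} G^{\alpha,\beta}_\lambda(y) G^{\alpha,\beta}_\lambda(-x) \, d\sigma_{\alpha,\beta}(\lambda)$ represent the same kernel under the inversion formula) then brings $p^{\alpha,\beta}_{2t}(z, -u)$ into the target form $p^{\alpha,\beta}_{2t}(-z, u)$. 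I expect the main technical obstacle to be precisely this juggling of the symmetries of $p^{\alpha,\beta}$: Proposition~\ref{pro3}(c) on its own is not enough, and the argument-swap symmetry coming from the Opdam--Cherednik inversion formula must be invoked together with it in order to reconcile the sign in the first argument of the heat kernel.
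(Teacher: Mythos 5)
Your overall architecture is the same as the paper's (transfer the inner product, exhibit the reproducing vector at $u$ as a time-$t$ heat-kernel slice, and identify its image under $P^{\alpha,\beta}_t$ with a time-$2t$ kernel), but your route has a genuine gap at the final step. Your computation lands on $P^{\alpha,\beta}_t k_u(z)=p^{\alpha,\beta}_{2t}(z,-u)$, and to reach the stated kernel $p^{\alpha,\beta}_{2t}(-z,u)$ you need the symmetry $p^{\alpha,\beta}_t(x,y)=p^{\alpha,\beta}_t(-x,-y)$ (equivalently, by Proposition \ref{pro3}(c), $p^{\alpha,\beta}_t(x,y)=p^{\alpha,\beta}_t(y,x)$). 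Your justification --- that $\int e^{-\frac{t}{2}(\lambda^2+\rho^2)}G^{\alpha,\beta}_\lambda(x)G^{\alpha,\beta}_\lambda(-y)\,d\sigma_{\alpha,\beta}(\lambda)$ and $\int e^{-\frac{t}{2}(\lambda^2+\rho^2)}G^{\alpha,\beta}_\lambda(y)G^{\alpha,\beta}_\lambda(-x)\,d\sigma_{\alpha,\beta}(\lambda)$ ``represent the same kernel under the inversion formula'' --- is not an argument: $G^{\alpha,\beta}_\lambda$ is not even in $x$ (its odd part carries the factor $\rho+i\lambda$) and $d\sigma_{\alpha,\beta}$ is a non-symmetric complex measure, so the two integrals are not manifestly equal, and nothing in the inversion formula identifies them. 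The symmetry is in fact true, but it requires a real proof, e.g.\ splitting $G^{\alpha,\beta}_\lambda=\varphi^{\alpha,\beta}_\lambda+\frac{\rho+i\lambda}{4(\alpha+1)}\sinh(2\cdot)\,\varphi^{\alpha+1,\beta+1}_\lambda$ and using the evenness of $|C_{\alpha,\beta}(\lambda)|^{-2}$ to kill the odd-in-$\lambda$ terms, or invoking self-adjointness of $\overline{P}^{\alpha,\beta}_t$ (Theorem \ref{th3}) together with reality/positivity of the kernel (Theorem \ref{th2}). The paper avoids this issue altogether by taking the reproducing vector $\overline{p^{\alpha,\beta}_t(-u,-\cdot)}$, furnished by the convolution representation $P^{\alpha,\beta}_tf=f*_{\alpha,\beta}\mathcal{F}^{\alpha,\beta}_t$, and evaluating $P^{\alpha,\beta}_t$ of it by one Parseval computation, which lands directly on $\int e^{-t(\lambda^2+\rho^2)}G^{\alpha,\beta}_\lambda(-z)G^{\alpha,\beta}_\lambda(-u)\,d\sigma_{\alpha,\beta}(\lambda)=p^{\alpha,\beta}_{2t}(-z,u)$; note that your vector $k_u=p^{\alpha,\beta}_t(u,-\cdot)$ and the paper's differ by exactly the symmetry you left unproved.

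A secondary point: the Chapman--Kolmogorov step cannot be read off from Theorem \ref{th1}(b) alone, which is an operator identity on $\S^r_{\alpha,\beta}(\mathbb{R})$, nor from plugging both spectral formulas into the double integral and swapping (the intermediate integral $\int G^{\alpha,\beta}_\lambda(-y)G^{\alpha,\beta}_\mu(y)A_{\alpha,\beta}(y)\,dy$ diverges, so that Fubini fails). The clean fix, consistent with the rest of your argument, is to note $k_u\in\S^r_{\alpha,\beta}(\mathbb{R})$ with $\H_{\alpha,\beta}(k_u)(\lambda)=e^{-\frac{t}{2}(\lambda^2+\rho^2)}G^{\alpha,\beta}_\lambda(u)$ by Proposition \ref{pro3}(c),(d), and then apply the spectral representation (\ref{eq15}) to $P^{\alpha,\beta}_tk_u$, which gives $p^{\alpha,\beta}_{2t}(u,-z)$ directly; this is harmless, but it still leaves you facing the same unproved symmetry to convert $p^{\alpha,\beta}_{2t}(u,-z)$ into $p^{\alpha,\beta}_{2t}(-z,u)$.
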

\begin{proof}
We give the inner product structure in $P^{\alpha, \beta}_t \left( L^2(\R, A_{\alpha, \beta}) \right)$ as follows: 
\[ \left\langle  F, G \right\rangle_{P^{\alpha, \beta}_t \left( L^2(\R, A_{\alpha, \beta}) \right)}:= \langle f , g \rangle_{L^2(\R, A_{\alpha, \beta})},  \]
where $f$ and $g$ are preimages of $F$ and $G$ respectively under the heat semigroup $P^{\alpha, \beta}_t$. Since $P^{\alpha, \beta}_t \left( L^2(\R, A_{\alpha, \beta}) \right)$ is a Hilbert space of analytic functions, the point evaluations are continuous. Therefore $P^{\alpha, \beta}_t \left( L^2(\R, A_{\alpha, \beta}) \right)$ is a reproducing kernel Hilbert space. For $F \in P^{\alpha, \beta}_t \left( L^2(\R, A_{\alpha, \beta}) \right)$, there exists $f \in L^2(\R, A_{\alpha, \beta})$ such that for $u \in \C$, we have
\begin{eqnarray*}
F(u) 
& = & \left( f *_{\alpha, \beta} \mathcal{F}_t^{\alpha, \beta} \right) (u)  \\
& = & \int_\R  f(y) \;\tau_u^{\alpha, \beta} \mathcal{F}_t^{\alpha, \beta}(-y) \; A_{\alpha, \beta}(y) \; dy \\
& = & \int_\R  f(y) \; p^{\alpha, \beta}_{t} (-u, -y) \; A_{\alpha, \beta}(y) \; dy \\
& = & \left\langle f, \; \overline{p^{\alpha, \beta}_{t} (-u, - \; \cdot)} \right\rangle_{L^2(\R, A_{\alpha, \beta})}.
\end{eqnarray*}
Since $F(u) = \left\langle F, K^{\alpha, \beta}_t (\cdot,u) \right\rangle_{P^{\alpha, \beta}_t \left( L^2(\R, A_{\alpha, \beta}) \right)}$, we get
\begin{eqnarray*}
K^{\alpha, \beta}_t (z,u)
& = & P^{\alpha, \beta}_t \left( \overline{p^{\alpha, \beta}_{t} (-u, - \; \cdot)} \right) (z) \\
& = & \left( \overline{p^{\alpha, \beta}_{t} (-u, - \; \cdot)} *_{\alpha, \beta} \mathcal{F}_t^{\alpha, \beta} \right) (z) \\
& = & \int_\R  \overline{p^{\alpha, \beta}_{t} (-u, - y)} \;\tau_z^{\alpha, \beta} \mathcal{F}_t^{\alpha, \beta}(-y) \; A_{\alpha, \beta}(y) \; dy \\
& = & \int_\R  \overline{p^{\alpha, \beta}_{t} (-u, y)} \;\tau_z^{\alpha, \beta} \mathcal{F}_t^{\alpha, \beta}(y) \; A_{\alpha, \beta}(y) \; dy \\
& = & \int_\R  p^{\alpha, \beta}_{t} (-z, y) \; \overline{p^{\alpha, \beta}_{t} (-u, y)} \; A_{\alpha, \beta}(y) \; dy \\
& = & \left\langle p^{\alpha, \beta}_{t} (-z, \cdot) , \;  p^{\alpha, \beta}_{t} (-u, \cdot) \right\rangle_{L^2(\R, A_{\alpha, \beta})} \\
& = & \left\langle \H_{\alpha, \beta}^{-1} \left( e^{-\frac{t}{2}\left( \lambda^2+\rho^2\right)}  \; G^{\alpha, \beta}_\lambda(-z)  \right) (- \; \cdot), \; \H_{\alpha, \beta}^{-1} \left( e^{-\frac{t}{2}\left( \lambda^2+\rho^2\right)}  \; G^{\alpha, \beta}_\lambda(-u)  \right) (- \; \cdot)  \right\rangle_{L^2(\R, A_{\alpha, \beta})}  \\
& = & \left\langle  e^{-\frac{t}{2}\left( \lambda^2+\rho^2\right)}  \; G^{\alpha, \beta}_\lambda(-z), \;  e^{-\frac{t}{2}\left( \lambda^2+\rho^2\right)}  \; G^{\alpha, \beta}_\lambda(-u) \right\rangle_{L^2(\R, \sigma_{\alpha, \beta})}\\
& = & \int_{\R} e^{-\frac{t}{2}\left( \lambda^2+\rho^2\right)}  \; G^{\alpha, \beta}_\lambda(-z) \;  e^{-\frac{t}{2}\left( \lambda^2+\rho^2\right)} \; \overline{G^{\alpha, \beta}_\lambda(-u)} \; d\sigma_{\alpha, \beta}(\lambda)\\
& = & \int_{\R} e^{-t \left( \lambda^2+\rho^2\right)}  \; G^{\alpha, \beta}_\lambda(-z) \;  G^{\alpha, \beta}_\lambda(-u) \; d\sigma_{\alpha, \beta}(\lambda)\\
& = & p^{\alpha, \beta}_{2t} (-z,u).
\end{eqnarray*}
This completes the proof.
\end{proof} 
%%%%%%%%%%%%%%%%%%%%%%%%%%%%%%%%%%%%%%%%

\section{Applications}\label{sec4}

\subsection{The Jacobi--Cherednik--Markov processes}
In this subsection, we give a direct application of the Jacobi--Cherednik heat semigroup in terms of Markov theory and introduce a new family of one-dimensional Markov processes. We define the transition kernels $\mathcal{K}^{\alpha, \beta}_t$ associate to the heat semigroup $\left(P^{\alpha, \beta}_t, \;t \geq 0\right)$ by 
\begin{equation}\label{eq11}
\mathcal{K}^{\alpha, \beta}_t (x, dy)= p^{\alpha, \beta}_t(x, y) A_{\alpha, \beta} (y) dy, \quad t>0,
\end{equation}
which are probability measures on the Borel $\sigma$-field $\mathcal{B}_{\mathbb{R}}$, for all $x \in \mathbb{R}$ and $t>0$. If a Markov process $X = \left(X_t, \;t \geq 0\right)$ defined on some filtered probability space $\left(\Omega, \mathbb{F}, \left( \mathbb{F}_t, \;t \geq 0 \right), \mathbb{P} \right)$ has transition kernels given by (\ref{eq11}), then we call it a Jacobi--Cherednik process of index $(\alpha, \beta)$. Also, the properties of the Jacobi--Cherednik processes follow from the structure of the semigroup $\left(P^{\alpha, \beta}_t, \;t \geq 0\right)$. Moreover, the absolute value  $|X| = \left(|X_t|, \;t \geq 0\right)$ of a Jacobi--Cherednik process is a diffusion process on $\R_+$. In particular, we have the following result. 
\begin{proposition}
Let $ \left(X_t, \;t \geq 0\right)$ be a Jacobi--Cherednik process on $\mathbb{R}$ with transition kernels  given by (\ref{eq11}). Then the absolute value $ \left(|X_t|, \;t \geq 0\right)$ is a diffusion process on $\mathbb{R}_{+}$ with infinitesimal generator the operator 
\begin{equation}\label{eq12}
\frac{d^2}{dx^2} + \frac{A'_{\alpha, \beta} (x)}{A_{\alpha, \beta} (x)} \frac{d}{dx} + \rho^2 I.
\end{equation}
\end{proposition}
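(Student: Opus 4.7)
The plan is to derive the generator of $(|X_t|)$ by combining the Markov property of $X_t$ with the explicit action of $T^2_{\alpha,\beta}$ on even functions, after first establishing that the semigroup $(P^{\alpha,\beta}_t)$ preserves parity.

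Parity preservation is the cornerstone. I would verify it directly from formula (\ref{eq05}): if $\tilde{f}$ is even on $\R$, then $M\tilde{f}\equiv 0$ and (\ref{eq05}) collapses to the purely differential expression
\[ T^2_{\alpha,\beta}\tilde{f}(x) = \tilde{f}''(x) + \frac{A'_{\alpha,\beta}(x)}{A_{\alpha,\beta}(x)}\tilde{f}'(x) + \rho^2 \tilde{f}(x), \]
whose right-hand side is again even (the odd coefficient $A'_{\alpha,\beta}/A_{\alpha,\beta}$ multiplies the odd derivative $\tilde{f}'$). Consequently $T^2_{\alpha,\beta}$, and hence $\mathcal{D}_{\alpha,\beta}$, stabilizes the even subspace; by the construction of Theorem \ref{th3}, so does the semigroup $(P^{\alpha,\beta}_t)$. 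An alternative route uses the spectral representation (\ref{eq15}) together with the decomposition of $G^{\alpha,\beta}_\lambda$ into $\varphi^{\alpha,\beta}_\lambda$ (even in $x$) plus an odd correction, confirming that $P^{\alpha,\beta}_t\tilde{f}$ has no odd part when $\tilde{f}$ does not.

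With this in hand, I would define a semigroup $Q_t$ on $C_b(\R_+)$ by $Q_tf(r):=P^{\alpha,\beta}_t\tilde{f}(r)$, where $\tilde{f}(x)=f(|x|)$. Parity preservation guarantees that $Q_tf$ depends only on $r=|x|$, and the semigroup identity for $Q_t$ is inherited from Theorem \ref{th1}(b). The Markov property of $X_t$ then yields $\mathbb{E}\bigl[f(|X_{t+s}|)\,\big|\,\mathcal{F}_s\bigr]=Q_tf(|X_s|)$, so $(|X_t|)$ is a genuine Markov process on $\R_+$ with semigroup $(Q_t)$.

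Finally, the infinitesimal generator is read off by applying the generator of $(P^{\alpha,\beta}_t)$ to even extensions. For $f\in C^2((0,\infty))$ smooth enough that $\tilde{f}\in C^2(\R)$ (which forces $f'(0^+)=0$), the even-function simplification above gives
\[ \frac{d}{dt}\Big|_{t=0} Q_tf(r) = \mathcal{D}_{\alpha,\beta}\tilde{f}(r), \]
which is the second-order elliptic operator displayed in (\ref{eq12}) acting on $f$. The smoothness and ellipticity on $(0,\infty)$ identify $(|X_t|)$ as a diffusion. I expect the main subtlety to be at the boundary $r=0$: one must justify that the even extension produces the correct (reflecting/Neumann) boundary behavior and that the relevant test functions lie in the domain of $\overline{\mathcal{D}_{\alpha,\beta}}$ from Theorem \ref{th2}; the algebraic calculation producing the generator is essentially immediate from (\ref{eq05}).
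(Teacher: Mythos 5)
Your core idea is the same as the paper's: the paper's entire proof consists of the observation that on even functions the difference part $Mf$ in (\ref{eq05}) vanishes, so that $T^2_{\alpha,\beta}$ restricts to the differential operator (\ref{eq12}). Your additional scaffolding --- parity preservation of $T^2_{\alpha,\beta}$ and hence of the semigroup, the induced semigroup $Q_t f(r)=P^{\alpha,\beta}_t\tilde f(r)$ on $\R_+$, the Markov-property step, and the remark about the reflecting behaviour at $r=0$ --- is a more careful and complete version of the same route than what the paper records, and is correct in outline.

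There is, however, one genuine slip in your final identification. The quantity you compute, $\frac{d}{dt}\big|_{t=0}Q_t f(r)=\mathcal{D}_{\alpha,\beta}\tilde f(r)$ with $\mathcal{D}_{\alpha,\beta}=\tfrac12\bigl(T^2_{\alpha,\beta}-\rho^2\bigr)$, equals on even functions $\tfrac12\bigl(\tilde f''+\tfrac{A'_{\alpha,\beta}}{A_{\alpha,\beta}}\tilde f'\bigr)$, which is \emph{not} the operator (\ref{eq12}): that operator is the restriction of $T^2_{\alpha,\beta}$ itself, keeping the $+\rho^2 I$ term and carrying no factor $\tfrac12$. So your displayed claim that $\mathcal{D}_{\alpha,\beta}\tilde f$ ``is the operator displayed in (\ref{eq12})'' is false as written. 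To arrive at the generator asserted in the Proposition you must use the convention of Theorems \ref{th2} and \ref{th3}, where the closure of $T^2_{\alpha,\beta}$ is taken as the generator of the semigroup, and accordingly replace $\mathcal{D}_{\alpha,\beta}\tilde f$ by $T^2_{\alpha,\beta}\tilde f$ in your last display. (The paper is itself loose here: the semigroup built from the multiplier $e^{-\frac t2(\lambda^2+\rho^2)}$ has generator $\mathcal{D}_{\alpha,\beta}$, while Theorem \ref{th2} attributes it to $\overline{T^2_{\alpha,\beta}}$; but within your own argument you cannot both call $\mathcal{D}_{\alpha,\beta}$ the generator and equate it with (\ref{eq12}) --- you need to fix one convention and carry the $\rho^2$ shift and the factor $\tfrac12$ consistently.)
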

\begin{proof}
The proof follows from the fact that the restriction of $T^2_{\alpha, \beta}$ given by (\ref{eq05}) on even functions  is the operator of the form (\ref{eq12}). 
\end{proof}

If $\rho=0$, then the process $ \left(|X_t|, \;t \geq 0\right)$ with generator $\frac{d^2}{dx^2} + \frac{A'_{\alpha, \beta} (x)}{A_{\alpha, \beta} (x)} \frac{d}{dx}$ is called the Jacobi diffusion process of index $(\alpha, \beta)$ on $\mathbb{R}_{+}$ (see \cite{gru97, gru98}). Also, the Jacobi process is the radial part of the Brownian motion on a Riemannian symmetric space for some special values of $(\alpha, \beta)$ (see \cite{gru98}). In particular, for $\alpha =\frac{1}{2}$ and $\beta=-\frac{1}{2}$ the Jacobi process is the hyperbolic-Bessel process (see \cite{gru97, yor99}). Moreover, if the Jacobi--Cherednik process has a jump at time $s$, then $X_s=-X_{s^-}$ and this implies that $\left(|X_t|, \;t \geq 0\right)$ is a continuous process. The Dunkl processes, which are the Markov processes associated with the square of the Dunkl operator $D_\alpha$ were studied in \cite{yor06}. Furthermore, if $\left(P^{\alpha, \beta}_t, \;t > 0\right)$ is a Feller-semigroup, then the Jacobi--Cherednik processes have a version with c\'adl\`ag trajectories and satisfy the strong Markov property (see \cite{yor99}).
 
\subsection{The modified Poisson equation}

Here, we use the results on the heat semigroup to solve the modified Poisson equation $\frac{1}{2}\left(T^2_{\alpha, \beta} - \rho^2 \right) u=-f$. 
 
\begin{theorem}\label{th4}
Let $f \in L^1(\R, A_{\alpha, \beta})$ such that $\H_{\alpha, \beta} (f) \in L^1(\R, \sigma_{\alpha, \beta})$. Then the function 
\begin{equation}\label{eq13}
\mathcal{G} f(x)= \int_0^{\infty} \int_{\R} p^{\alpha, \beta}_t(x, y) f(y) A_{\alpha, \beta}(y) \;dy dt
\end{equation}
is in $C^2(\R)$, bounded and satisfies the modified Poisson equation $\frac{1}{2}\left(T^2_{\alpha, \beta} - \rho^2 \right) \mathcal{G} f=-f$. 
\end{theorem}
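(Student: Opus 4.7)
The approach is to convert the time integral defining $\mathcal{G}f$ into a spectral multiplier acting via the Opdam--Cherednik transform, and then to invert $\tfrac12(T^2_{\alpha,\beta}-\rho^2)$ on the Fourier side. Starting from $\mathcal{G}f(x) = \int_0^\infty P^{\alpha,\beta}_t f(x)\,dt$ together with the spectral expression (\ref{eq6}) for $P^{\alpha,\beta}_t f$, I would apply Tonelli/Fubini -- whose hypotheses hold because $\rho>0$, $|G^{\alpha,\beta}_\lambda(x)|\leq 1$, $\mathcal{H}_{\alpha,\beta}(f)\in L^1(\mathbb{R},\sigma_{\alpha,\beta})$, and $\int_0^\infty e^{-\frac{t}{2}(\lambda^2+\rho^2)}\,dt = \frac{2}{\lambda^2+\rho^2}$ -- to obtain
\[ \mathcal{G}f(x) = \int_{\mathbb{R}} \frac{2}{\lambda^2+\rho^2}\, \mathcal{H}_{\alpha,\beta}(f)(\lambda)\, G^{\alpha,\beta}_\lambda(x)\, d\sigma_{\alpha,\beta}(\lambda). \]
The bound $|\mathcal{G}f(x)|\leq \tfrac{2}{\rho^2}\|\mathcal{H}_{\alpha,\beta}(f)\|_{L^1(\sigma_{\alpha,\beta})}$ is then immediate from $|G^{\alpha,\beta}_\lambda(x)|\leq 1$, establishing boundedness.

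Next, I would differentiate twice under the integral sign to obtain $C^2$-regularity. The key estimate is (\ref{eq04}): for $\lambda\in\mathbb{R}$ one has $|\partial_x^n G^{\alpha,\beta}_\lambda(x)|\leq C(1+|x|)^n e^{-\rho|x|}$, and this bound is uniform in $\lambda$ for $n=0,1,2$; combined with the integrable weight $\frac{1}{\lambda^2+\rho^2}\mathcal{H}_{\alpha,\beta}(f)(\lambda)$, the dominated convergence theorem yields $\mathcal{G}f\in C^2(\mathbb{R})$. Then, applying $\tfrac12(T^2_{\alpha,\beta}-\rho^2)$ termwise using the explicit form (\ref{eq05}), together with the eigenrelation (\ref{eq02}), $T^2_{\alpha,\beta}G^{\alpha,\beta}_\lambda = -\lambda^2 G^{\alpha,\beta}_\lambda$, gives
\[ \tfrac12\bigl(T^2_{\alpha,\beta}-\rho^2\bigr)\mathcal{G}f(x) = -\int_{\mathbb{R}} \mathcal{H}_{\alpha,\beta}(f)(\lambda)\, G^{\alpha,\beta}_\lambda(x)\, d\sigma_{\alpha,\beta}(\lambda) = -\mathcal{H}_{\alpha,\beta}^{-1}\bigl(\mathcal{H}_{\alpha,\beta}(f)\bigr)(x) = -f(x), \]
where the last equality uses the inversion formula for $\mathcal{H}_{\alpha,\beta}$, legitimate since $\mathcal{H}_{\alpha,\beta}(f)\in L^1(\mathbb{R},\sigma_{\alpha,\beta})$.

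The main obstacle is rigorously justifying the interchange of $T^2_{\alpha,\beta}$ with the integral, complicated by the singularity of $A'_{\alpha,\beta}/A_{\alpha,\beta}$ at $x=0$. I would handle this either by noting that $MG^{\alpha,\beta}_\lambda$ vanishes at $x=0$ fast enough to absorb the $1/x$-type singularity, producing a $\lambda$-uniform, locally bounded dominant for each term in (\ref{eq05}); or more cleanly by bypassing pointwise differentiation altogether: compute $\mathcal{H}_{\alpha,\beta}(\mathcal{G}f)(\lambda) = \frac{2}{\lambda^2+\rho^2}\mathcal{H}_{\alpha,\beta}(f)(\lambda)$ directly from the spectral representation, apply (\ref{eq06}) to deduce $\mathcal{H}_{\alpha,\beta}\bigl(\tfrac12(T^2_{\alpha,\beta}-\rho^2)\mathcal{G}f\bigr)(\lambda) = -\mathcal{H}_{\alpha,\beta}(f)(\lambda)$, and invoke injectivity of $\mathcal{H}_{\alpha,\beta}$.
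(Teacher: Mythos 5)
Your proposal is correct and follows essentially the same route as the paper: both pass to the spectral representation $\mathcal{G}f(x)=2\int_{\R}\frac{\H_{\alpha,\beta}(f)(\lambda)}{\lambda^2+\rho^2}\,G^{\alpha,\beta}_\lambda(x)\,d\sigma_{\alpha,\beta}(\lambda)$ via Fubini and $\int_0^\infty e^{-\frac{t}{2}(\lambda^2+\rho^2)}dt=\frac{2}{\lambda^2+\rho^2}$, then apply $T^2_{\alpha,\beta}$ under the integral, use the eigenrelation (\ref{eq02}) and the inversion formula (valid since $\H_{\alpha,\beta}(f)\in L^1(\R,\sigma_{\alpha,\beta})$) to get $-2f+\rho^2\mathcal{G}f$. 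In fact you are somewhat more careful than the paper, which asserts the $C^2$ regularity and the interchange of $T^2_{\alpha,\beta}$ with the integral without the justification via (\ref{eq04}) and dominated convergence that you supply.
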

\begin{proof}
First, we prove that $\mathcal{G} f$ is well-defined and bounded. For $f \geq 0$, using Proposition \ref{pro3} (a), we get
\[ \mathcal{G} f(x)= \int_0^{\infty} \int_{\R} \int_{\mathbb{R}}  e^{-\frac{t}{2}\left( \lambda^2+\rho^2\right)}  G^{\alpha, \beta}_\lambda(x)\;  G^{\alpha, \beta}_\lambda(-y)  f(y) A_{\alpha, \beta}(y) \; d\sigma_{\alpha, \beta}(\lambda) dy dt. \]
Since $f \in L^1(\R, A_{\alpha, \beta})$, using $|G^{\alpha, \beta}_\lambda(x) | \leq 1$, we obtain
\begin{eqnarray*}
&& \int_{\R} \int_{\mathbb{R}}  e^{-\frac{t}{2}\left( \lambda^2+\rho^2\right)} \left| G^{\alpha, \beta}_\lambda(x)\;  G^{\alpha, \beta}_\lambda(-y)  f(y) A_{\alpha, \beta}(y) \right| \; d\sigma_{\alpha, \beta}(\lambda) dy \\
&& \leq \int_{\R} \int_{\mathbb{R}}  e^{-\frac{t}{2}\left( \lambda^2+\rho^2\right)} f(y) A_{\alpha, \beta}(y) \; d\sigma_{\alpha, \beta}(\lambda) dy < \infty.  
\end{eqnarray*}
Now, using the definition of the Opdam--Cherednik transform and Fubini's theorem, we have
\begin{equation}\label{eq14}
\mathcal{G} f(x)= \int_0^{\infty} \int_{\R}  e^{-\frac{t}{2}\left( \lambda^2+\rho^2\right)}  G^{\alpha, \beta}_\lambda(x)\; \H_{\alpha, \beta} (f) (\lambda) \; d\sigma_{\alpha, \beta}(\lambda) dt.
\end{equation}
Hence,
\[ |\mathcal{G} f(x)| \leq \int_0^{\infty} \int_{\R}  e^{-\frac{t}{2}\left( \lambda^2+\rho^2\right)}  \left| \H_{\alpha, \beta} (f) (\lambda) \right| d\sigma_{\alpha, \beta}(\lambda) dt
= 2  \int_{\R} \frac{\left| \H_{\alpha, \beta} (f) (\lambda) \right|}{\lambda^2+\rho^2} \; d\sigma_{\alpha, \beta}(\lambda) <\infty. \]
Therefore, $\mathcal{G} f$ is well-defined and bounded on $\R$. Here, we call the operator $\mathcal{G}$ the Green operator. Now, applying Fubini's theorem in (\ref{eq14}), we obtain
\[ \mathcal{G} f(x)= 2  \int_{\R} \frac{\H_{\alpha, \beta} (f) (\lambda)}{\lambda^2+\rho^2} \; G^{\alpha, \beta}_\lambda(x) \; d\sigma_{\alpha, \beta}(\lambda). \]
For $x \in \R$, we have
\[ T^2_{\alpha, \beta} \mathcal{G} f(x)= 2  \int_{\R} \frac{\H_{\alpha, \beta} (f) (\lambda)}{\lambda^2+\rho^2} \; T^2_{\alpha, \beta}G^{\alpha, \beta}_\lambda(x) \; d\sigma_{\alpha, \beta}(\lambda). \]
Using the relation (\ref{eq02}) and the definition of the inverse Opdam--Cherednik transform, we get
\begin{eqnarray*}
T^2_{\alpha, \beta} \mathcal{G} f(x)
& = & - 2  \int_{\R} \frac{\H_{\alpha, \beta} (f) (\lambda)}{\lambda^2+\rho^2} \;  \lambda^2  G^{\alpha, \beta}_\lambda(x) \; d\sigma_{\alpha, \beta}(\lambda) \\
& = & - 2  \int_{\R} \H_{\alpha, \beta} (f) (\lambda) \;  \frac{\lambda^2 + \rho^2 -\rho^2}{\lambda^2+\rho^2} \;    G^{\alpha, \beta}_\lambda(x) \; d\sigma_{\alpha, \beta}(\lambda) \\
& = & - 2  \int_{\R} \H_{\alpha, \beta} (f) (\lambda) \; G^{\alpha, \beta}_\lambda(x) \; d\sigma_{\alpha, \beta}(\lambda) + 2 \rho^2 \int_{\R}  \; \frac{\H_{\alpha, \beta} (f) (\lambda)}{\lambda^2+\rho^2} \; G^{\alpha, \beta}_\lambda(x) \; d\sigma_{\alpha, \beta}(\lambda) \\
& = & - 2 f(x) + \rho^2 \mathcal{G} f(x).
\end{eqnarray*}
Hence, the function $\mathcal{G} f$ satisfies the modified Poisson equation $\frac{1}{2} \left( T^2_{\alpha, \beta} - \rho^2 \right) \mathcal{G} f = - f$.  
\end{proof}

\section*{Acknowledgments}
The first author is grateful to the Science and Engineering Research Board (SERB), Government of India for providing the National Post-Doctoral Fellowship [File No. PDF/2021/000192]. The first author is also partially supported by the XJTLU Research Development Fund (RDF-23-01-027). The authors wish to thank the referees for their helpful comments and suggestions that helped to improve the quality of the paper.

\section*{Conflict of interest}
The authors declare that there is no potential conflict of interest regarding the publication of this article.

\section*{Data Availability}
Data sharing is not applicable to this article as no new data were created or analyzed in this study.

\section*{ORCID}
{\it Anirudha Poria} https://orcid.org/0000-0002-0224-3642

\end{document}